\numberwithin{equation}{section}
\def\dbE{\mathbb{E}}
\def\dbF{\mathbb{F}}
\def\dbG{\mathbb{G}}
\def\dbM{\mathbb{M}}
\def\dbP{\mathbb{P}}
\def\dbR{\mathbb{R}}
\def\dbQ{\mathbb{Q}}
\newcommand{\cC}{\mathcal{C}}
\newcommand{\cD}{\mathcal{D}}
\newcommand{\cE}{\mathcal{E}}
\newcommand{\cF}{\mathcal{F}}
\newcommand{\cG}{\mathcal{G}}
\newcommand{\cH}{\mathcal{H}}
\newcommand{\cI}{\mathcal{I}}
\newcommand{\cK}{\mathcal{K}}
\newcommand{\cL}{\mathcal{L}}
\newcommand{\cM}{\mathcal{M}}
\newcommand{\cP}{\mathcal{P}}
\newcommand{\cQ}{\mathcal{Q}}
\newcommand{\cS}{\mathcal{S}}
\newcommand{\cT}{\mathcal{T}}
\newcommand{\cU}{\mathcal{U}}
\newcommand{\cV}{\mathcal{V}}
\newcommand{\cY}{\mathcal{Y}}
\newcommand{\cZ}{\mathcal{Z}}
\newcommand{\Cbf}{\mathbf{C}}
\newcommand{\frakC}{\mathfrak{C}}
\def\no{\noindent}
\def\q{\quad}
\def\e{\varepsilon}
\def\Om{\Omega}
\def\om{\omega}
\def\t {\tau}
\def\si {\sigma}
\def\we {\wedge}
\def\eps{\varepsilon}
\newcommand{\ba}{\begin{array}} %%%MR : togliere gli array
\newcommand{\ea}{\end{array}}
\newcommand{\be}{\begin{equation}}
\newcommand{\ee}{\end{equation}}
\newcommand{\bea}{\begin{eqnarray}}
\newcommand{\eea}{\end{eqnarray}}
\newcommand{\beaa}{\begin{eqnarray*}}
\newcommand{\eeaa}{\end{eqnarray*}}
\newcommand{\Ind}{\mathbf{1}}
\DeclareMathOperator*{\esssup}{ess\,sup}
\DeclareMathOperator*{\essinf}{ess\,inf}
\newtheorem{theorem}{Theorem}[section] 
\newtheorem{assumption}[theorem]{Assumption}
\newtheorem{definition}[theorem]{Definition}
\newtheorem{proposition}[theorem]{Proposition}
\theoremstyle{definition}
\newtheorem{example}[theorem]{Example}
\newtheorem{remark}[theorem]{Remark}
\begin{document}

\title{Random horizon principal-agent problems
        \thanks{
        This work benefits from the financial support of the ERC Advanced Grant 321111, and the Chairs {\it Financial Risk} and {\it Finance and Sustainable Development}, and the NSFC Grant No.~11801365.
       }}

\author{Yiqing LIN\footnote{School of Mathematical Sciences, Shanghai Jiao Tong University, 200240 Shanghai, China.}
        \and Zhenjie REN\footnote{CEREMADE, Universit\'e Paris Dauphine, F-75775 Paris Cedex 16, France. }
        \and Nizar TOUZI\footnote{CMAP, \'Ecole Polytechnique, F-91128 Palaiseau Cedex, France.}
        \and Junjian YANG\footnote{FAM, Fakult\"at f\"ur Mathematik und Geoinformation, Vienna University of Technology, A-1040 Vienna, Austria.}
       }

\date{\today}

\maketitle

\begin{abstract} 
 We consider a general formulation of the random horizon principal-agent problem with a continuous payment and a lump-sum payment at termination. 
 In the European version of the problem, the random horizon is chosen solely by the principal with no other possible action from the agent than exerting effort on the dynamics of the output process. 
 We also consider the American version of the contract, where the agent can also quit by optimally choosing the termination time of the contract. 
 Our main result reduces such non-zero-sum stochastic differential games to appropriate stochastic control problems which may be solved by standard methods of stochastic control theory. 
 This reduction is obtained by following the Sannikov \cite{Sannikov08} approach, further developed in \cite{CPT18}. 
 We first introduce an appropriate class of contracts for which the agent's optimal effort is immediately characterized by the standard verification argument in stochastic control theory. 
 We then show that this class of contracts is dense in an appropriate sense, so that the optimization over this restricted family of contracts represents no loss of generality. 
 The result is obtained by using the recent well-posedness result of random horizon second-order backward SDE in \cite{LRTY20}.
\end{abstract}

\noindent
\textbf{MSC 2010 Subject Classification:} 91B40, 93E20 \newline
\vspace{-0.2cm}\newline
\noindent
% \textbf{JEL Classification Codes:} G11, C61\newline
% \vspace{-0.2cm}\newline
\noindent
\textbf{Key words:} Moral hazard, first-best and second-best contracting, second-order backward SDE, random horizon.  

%\tableofcontents

\section{Introduction}

The principal-agent problem is a classical moral hazard problem in economics with many applications in corporate governance and industrial economics, which is formulated as a Stackelberg game. The principal (she) delegates the management of an output process to the agent (he). A contract is signed beforehand, stipulating the terms of an incentive payment. The agent devotes a costly effort for the management of the output. Then, given the contract offered by the principal, he returns an optimal effort response which best balances between his cost of effort and the proposed compensation. Finally, the principal chooses the optimal contract so as to to incite the agent's effort to serve her interest. A crucial feature of the problem is that the principal only observe the output process, and has no access to the amount of effort exerted by the agent.

There is tremendous literature on this topic, mainly in the one-period setting; we refer to the seminal book \cite{BD05}. The first continuous time formulation of this problem was introduced by Holmstr\"{o}m and Milgrom \cite{HM87}. The importance of the continuous time formulation was best illustrated by the simplicity of the results. Since then, there has been a stream of research in this direction using the technique of calculus of variations. We refer to the book by Cvitani\'c and Zhang \cite{CZ12} for the main achievements with this point of view. 

An original method was introduced by Sannikov \cite{Sannikov08} which exploits in a very clever way the agent dynamic value process. This method was related by Cvitani\'c, Possama\"{\i} and Touzi \cite{CPT18} to the theory of backward stochastic differential equations, and extended to the setting where the agent is allowed to control the diffusion of the out process. Such an extension is particularly relevant in portfolio management as illustrated in Cvitani\'c, Possama\"{\i} and Touzi \cite{CPT17}. We also refer to A\"{\i}d, Possama\"{\i} and Touzi \cite{APT19} for an application to the demand-response problem in electricity pricing.

Sannikov's approach consists of deriving a representation of the dynamic value process, by means of the dynamic programming principle, and then reformulating the principal objective as a control problem on the coefficients of this representation. By this methodology, the initial Stackelberg stochastic differential game is reduced to a stochastic control problem. Notice that this representation is nothing but the non-Markovian version of the Hamilton-Jacobi-Bellman equation corresponding to the agent problem. The extension to the controlled diffusion setting follows the same idea but requires, in addition, a density result of second order backward SDEs. 

The main objective of this paper is to extend the reduction result of \cite{CPT18} to the random horizon context. In particular, this allows one to cover the seminal paper of Sannikov \cite{Sannikov08}. The random horizon setting is commonly used in applications in order to reduce the dimensionality of control problems, as the time variable disappears in homogeneous formulations. Consequently, if the controlled state is one-dimensional, the HJB partial differential equation reduces to a nonlinear ordinary differential equation whose analysis is usually simpler, and which may be found in explicit form in several cases. 

We shall introduce two versions of the random horizon principal-agent problem. The first is a direct extension of the finite horizon one, and is named as the European contracting problem. The second one corresponds to the setting of \cite{Sannikov08}, and is named as the American contracting problem due to the possibility offered to the agent and the principal to terminate the contract at some chosen stopping time. In other words, both actors are faced with an optimal stopping problem in addition to optimally controlling the coefficients of the controlled output process.

As in \cite{CPT18}, our main results, both for the European and the American contracting problems, rely on a density property of second order backward SDEs in an appropriate family of solutions of the non-Markovian version of the agent HJB equation. The corresponding well-posedness result is obtained in our accompanying paper \cite{LRTY20}. However, while the density argument for the European contracting follows the corresponding argument in \cite{CPT18}, the American contracting argument requires a new justification based on understanding the principal choice of the optimal termination time of the contract, given the optimal stopping response of the agent.

This paper is organized as follows. The random horizon principal-agent problem is described in Section \ref{sec:formulation} both in its European and American formulations. Section \ref{sec:firstbest} shows that our European contracting problem does not coincide with the corresponding first best contracting problem in the context where the discount factors of both actors are deterministic. This is in contrast with the deterministic horizon situation. In Section \ref{sec:mainresult}, we state our main reduction results, and we report their proof based on a density property of second order backward SDEs. We illustrate the usefulness of our reduction result through a solvable example in Section \ref{sec:example}. Finally, Section \ref{sec:proof} contains the proof of the key density result.

\paragraph{Preliminaries and notation}

Given an integer $d$ and some initial condition $X_0\in\dbR^d$, we introduce the canonical space of continuous paths $\Omega:=\big\{\omega\in\cC\big(\dbR_+, \dbR^d\big):~\omega_0=X_0 \big\}$, 
   equipped with the distance defined by $ \|\omega-\omega'\|_\infty:=\sum_{n\ge 0}2^{-n}\big(\sup_{0\leq t\leq n}|\omega_{t}-\omega'_{t}|\wedge 1\big).$ We denote by $\mathfrak{M}_+^1(\Omega)$ the collection of all probability measures on $\Omega$.
 
The canonical process $X$ is defined by $X_t(\omega) := \omega_t$, for all $\omega\in\Omega$, with corresponding canonical filtration $\dbF=(\cF_t)_{t\geq 0}$. 
 We also introduce the  the right limit $\dbF^+=(\cF^+_t)_{t\geq 0}$  of $\dbF$, and for a measure $\dbP\in\mathfrak{M}_+^1(\Omega)$, the augmentation $\dbF^{+,\dbP}$ of the filtration $\dbF^+$ under $\dbP$. 
 For a subset $\cP\subseteq\mathfrak{M}_+^1(\Omega)$, we introduce $\dbF^\cP:=\big(\cF^\cP_t\big)_{t\geq 0}$ and $\dbF^{+,\cP}:=\big(\cF^{+,\cP}_t\big)_{t\geq 0}$, where
  $$ \cF_t^\cP:=\bigcap_{\dbP\in\cP}\cF_t^\dbP \quad \mbox{and} \quad \cF^{+,\cP}_t := \bigcap_{\dbP\in\cP}\cF^{+,\dbP}_t.  $$
 
 We say that a property holds $\cP$-quasi-surely, abbreviated as $\cP$-q.s., if it holds $\dbP$-a.s.~for all $\dbP\in\cP$. 
 The universal filtration $\dbF^U:=\big(\cF^U_t\big)_{t\geq 0}$ and the corresponding (right-continuous) completion $\dbF^{+,U}:=\big(\cF^{+,U}_t\big)_{t\geq 0}$ correspond to the case $\cP=\mathfrak{M}_+^1(\Omega)$.

 We denote by $\cP_{\mbox{\tiny loc}}\subseteq\mathfrak{M}_+^1(\Omega)$ the collection of probability measures $\dbP$ such that $X$ is a continuous $\dbP$-local martingale with quadratic variation process absolutely continuous in $t$, 
   with respect to the Lebesgue measure, with corresponding density
   \begin{equation*}
     \widehat{\sigma}^2_t := \limsup_{n\rightarrow \infty} n \big(\langle X, X\rangle_t-\langle X, X\rangle_{(t-\frac{1}{n})\vee 0}\big), \quad t>0.
   \end{equation*}
 Here, the quadratic covariation process $\langle X \rangle$ is pathwisely well-defined by Karandikar \cite{Kar95}.  
 Then, for all $\dbP\in \cP_{\mbox{\tiny loc}}$, we may find a Brownian motion $W$ such that
   \begin{equation*}
     X_t = \int^t_0\widehat{\sigma}_sdW_s, \quad t\geq 0, \quad \dbP\mbox{-a.s.}
   \end{equation*}
For a stopping time $\tau$ we define the stochastic interval
  $\llbracket 0, \tau\rrbracket:=\{(t,\omega)\in\dbR_+\times\Omega:t\leq\tau(\omega)\}$.

\vspace{3mm}

We next enlarge the canonical space to $\overline \Om:=\Om\times\Om$ and denote by $(X,W)$ the coordinate process in $\overline \Omega$. 
Denote by $\overline\dbF$ the filtration generated by $(X,W)$. 
For each $\mathbb{P}\in  \cP_{\mbox{\tiny loc}}$ we may construct a probability measure $\overline\dbP$ on $\overline\Omega$ such that $\overline\dbP\circ X^{-1}=\dbP$, 
    $W$ is a $\overline\dbP$-Brownian motion, and $dX_t = \widehat\si_t dW_t$, $\overline\dbP$-a.s. 
From now on, we abuse notation, and keep using $\dbP$ to represent $\overline\dbP$ on $\overline \Om$. 
Denote by $\cQ_L(\mathbb{P})$ the set of all probability measures $\dbQ^\lambda$ such that
  \begin{eqnarray}\label{DQP} 
    {\rm D}_t^{\dbQ^\lambda|\dbP} := \frac{d\dbQ^\lambda}{d\dbP}\bigg|_{\overline\cF_t} = \exp{\bigg(\int_0^t\lambda_s\cdot dW_s - \frac{1}{2}\int_0^t|\lambda_s|^2ds\bigg)}, \quad t\geq 0 
  \end{eqnarray}
  for some $\dbF^{+, \mathbb{P}}$-progressively measurable process $\lambda=(\lambda)_{t\geq 0}$ uniformly bounded by $L$. 
By Girsanov's theorem,
  $W^{\lambda}:= W-\int_0^{\cdot}\lambda_s ds $ is a $\dbQ^\lambda$-Brownian motion on any finite horizon, and thus $X^{\lambda}:= X -\int_0^\cdot \widehat\si_t \lambda_t dt $ is a $\dbQ^\lambda$-martingale on any finite horizon. We denote 
   \begin{align*}
   	\cE^\dbP[\cdot] := \sup_{\dbQ\in\cQ_L(\dbP)}\dbE^{\dbQ}[\cdot]~~\mbox{for}~\dbP\in\cP_{\mbox{\tiny loc}}~~\mbox{and}~~\cE^\cP[\cdot] := \sup_{\dbP\in\cP}\cE^{\dbP}[\cdot]~\mbox{for a subset}~
   	\cP\subseteq \cP_{\mbox{\tiny loc}}.
   \end{align*}
Let $p>1$ and $\alpha\in\dbR$, and let $\tau$ be an $\dbF^{+,\dbP}$-stopping time. 
Let $\dbG:=\{\cG_t\}_{t\geq 0}$ be a filtration with $\cG_t\supseteq\cF_t$ for all $t\geq 0$, so that $\tau$ is also a $\dbG$-stopping time. 
We denote the following: 
\begin{itemize}
 \item $\cL^p_{\alpha,\tau}(\cP, \dbG)$, the space of $\dbR$-valued, $\cG_\tau$-measurable $\dbR$-valued random variables $\xi$, such that 
          $$ \|\xi\|^p_{\cL_{\alpha,\t}^p(\cP)} := \cE^\cP\big[\big|e^{\alpha\tau}\xi\big|^p\big] <\infty. $$
 \item $\cD^p_{\alpha, \tau}(\cP, \dbG)$, the space of scalar c\`adl\`ag $\dbG$-adapted processes $Y$ such that
           $$ \|Y\|^p_{\cD^p_{\alpha,\tau}} 
               := 
              \cE^{\cP}\left[\sup_{0\leq t\leq\tau}\big|e^{\alpha t}Y_t\big|^p\right] <\infty. $$
 \item $\cH^p_{\alpha, \tau}(\cP, \dbG)$, the space of $\dbR^d$-valued, $\dbF^{+}$-progressively measurable processes $Z$ such that
          $$ \|Z\|^p_{\cH^p_{\alpha,\tau}} 
               := 
              \cE^{\cP} \left[\left(\int_0^\tau \big|e^{\alpha t}\widehat\sigma_t^{\top}Z_t\big|^2dt\right)^{\frac{p}{2}}\right] <\infty. $$
\end{itemize}

\section{Principal-agent problem} 
\label{sec:formulation}

\subsection{Controlled state equation}
The agent's effort $\nu=(\alpha,\beta)$ is an $\dbF$-optional process with values in $A\times B$ for some subsets $A$ and $B$ of finite-dimensional spaces. We denote the set of such effort processes as $\mathfrak{U}$. 
The output process takes values in $\dbR^d$, with distribution defined by means of the controlled coefficients:
 \begin{align*}
   & \lambda:\dbR_+\times\Omega\times A\longrightarrow \dbR^d, ~\mbox{bounded, }~\lambda(\cdot,a)~\dbF\mbox{-optional for any}~a\in A, \\
   &  \sigma:\dbR_+\times\Omega\times B\longrightarrow \mathcal M_{d}(\dbR), ~\mbox{bounded, }~\sigma(\cdot,b)~\dbF\mbox{-optional for any}~b\in B,
 \end{align*} 
where $\mathcal M_{d}(\mathbb R)$ denotes the space of all square $d\times d$ matrices with real entries.
The controlled state equation is defined by the SDE:
 \begin{equation} \label{Xalphabeta}
   X_t = X_0 + \int_0^t\sigma_r(X,\beta_r)\big(\lambda_r(X,\alpha_r)dr + dW_r\big), \quad t\geq 0, 
 \end{equation}
 where $W$ is a $d$-dimensional Brownian motion. 
Notice that the processes $\alpha$ and $\beta$ are functions of the path of $X$. As is standard in probability theory, the dependence on the canonical process will be suppressed.

A {\it control model} is a weak solution of \eqref{Xalphabeta} defined as a pair $\dbM:=(\dbP,\nu)\in\mathfrak{M}_+^1(\Omega)\times\mathfrak{U}$. 
We denote $\cM$ to be the collection of all such control models, as opposed to control processes. 
We assume throughout this paper the following implicit condition on $\sigma$ (see Remark \ref{rem:driftlessSDE} below):
 \begin{equation}\label{eq:Mnonvide}
   \cM\neq\emptyset.
 \end{equation}
This condition is satisfied, for instance, if $x\mapsto\sigma_t(x,b)$ is bounded and continuous for some constant control $b\in B$, see, e.g., \cite[Theorem 5.4.22, Remark 5.4.23]{KS91}.

Notice that we do not restrict the controls to those for which weak uniqueness holds. 
Moreover, by Girsanov's theorem, two weak solutions of \eqref{Xalphabeta} associated with $(\alpha,\beta)$ and $(\alpha',\beta)$ are equivalent. 
However, different diffusion coefficients induce mutually singular weak solutions of the corresponding stochastic differential equations.

We finally introduce the following sets:
 \begin{equation*}
 \begin{array}{ll}
   \cP(\nu):=\big\{\dbP\in\mathfrak{M}_+^1(\Omega),\; (\dbP,\nu)\in\cM\big\}, &  \displaystyle\cP:=\cup_{\nu\in\mathfrak{U}}\cP(\nu), \\
   \cU(\dbP):=\big\{\nu\in\mathfrak{U},\; (\dbP,\nu)\in\cM\big\},          &  \displaystyle\cU:=\cup_{\dbP\in\mathfrak{M}_+^1(\Omega)}\cU(\dbP).
 \end{array}
 \end{equation*}

\begin{remark} \label{rem:driftlessSDE}
 By the boundedness of $\lambda$, we may connect any admissible model $(\dbP,\nu)\in\cM$ to a subset of $\cP_b$ as follows. 
 Let $(\dbQ,\beta)$ be an arbitrary weak solution of the driftless SDE
  \begin{equation} \label{SDEsigma}
    X_t = X_0+\int_0^t\sigma_r(X,\beta_r)dW_r, \quad t\geq 0,
  \end{equation}
for some optional $B$-valued process $\beta$. 
 Then, $\dbQ\in\cP_{\rm loc}$, and we may use the Girsanov change of measure theorem to define for every $A$-valued optional process $\alpha$ a pair $\dbM:=\big(\dbP,(\alpha,\beta)\big)$ which solves the SDE \eqref{Xalphabeta} by setting 
   $$
  \frac{d\dbP}{d\dbQ}\bigg|_{\overline\cF_t} 
  = 
  \exp\left(\int_0^t\lambda_s(X,\alpha_s)\cdot dW_s - \frac{1}{2}\int_0^t|\lambda_s(X,\alpha_s)|^2ds\right),\quad t\geq 0. $$
  
Conversely, any admissible model $\dbM=\big(\dbP,(\alpha,\beta)\big)\in\cM$ induces a probability measure $\dbQ\in\cP_{\rm loc}$ by the last Girsanov equivalent change of measure.  \qed
\end{remark}

\subsection{Agent's problem}
The effort exerted by the agent is costly, with cost of effort measured by the function
 \begin{align*}
 	& c:\dbR_+\times\Omega\times A \times  B \rightarrow \dbR_+, \mbox{ measurable, } \\
 	& c(\cdot,u)~\dbF\mbox{-optional for all}~u\in A\times B,
 	\mbox{ and } c(.,0)=0. 
 \end{align*}
Let $(\dbP,\nu)\in\mathcal M$ be fixed. The canonical process $X$ is called the {\it output} process, and the control $\nu$ is called the agent's {\it effort} or {\it action}. 
The agent exerts the effort process $\nu$ to control the (distribution of the) output process defined by the state equation \eqref{Xalphabeta}, while subject to cost of effort at rate $c(X,\nu)$. 
The agent values future income through the discount factor $\cK^{\nu}:=e^{-\int_0^. k_r(\nu_r)dr}$, where
  $$ k:\dbR_+\times\Omega\times A\times B\rightarrow \dbR \mbox{ bounded, with } k(\cdot,u)~\dbF\mbox{-optional for all}~u\in A\times B, $$
 and $k(.,0)=k_0$ for some constant $k_0>0$.

\vspace{3mm} 
 
{\it A contract} is a triple $\Cbf=(\tau^{\scriptscriptstyle\rm P},\pi,\xi)$ composed of the following:
  \begin{itemize}
   \item a finite stopping time $\tau^{\scriptscriptstyle \rm P}$, representing the termination time of the contract;
   \item an optional process $\big(\pi_{t\wedge\tau^{\scriptscriptstyle\rm P}}\big)_{t\ge 0}$, representing a rate of payment from the principal to the agent; and
   \item an $\cF_{\tau^{\scriptscriptstyle\rm P}}$-measurable random variable $\xi$, representing the final compensation at retirement.
  \end{itemize}

The principal observes only the output process $X$ and has no access to the information on the agent's effort. 
Consequently, the components of the contract $\Cbf$ can only be contingent on $X$, which is immediately encoded in our weak formulation setting.

The set of {\it admissible contracts} $\frakC$ consists of all such contracts which satisfy in addition the technical requirements reported in subsection \ref{sect:admissiblecontracts} below.

The agent's preferences are defined by a continuous strictly increasing utility function $U:\dbR\rightarrow\dbR$. 
Given a contract $\Cbf=(\tau^{\scriptscriptstyle\rm P},\pi,\xi)$, we shall consider in this paper two possible contracting problems which are both relevant in the economics literature.

\vspace{3mm}
 
\no {\bf Agent cannot quit.} We first consider the contract problem as in Sannikov \cite{Sannikov08}. By analogy with derivatives securities, we refer to this setting as that of an {\it European contracting problem}. 
The objective function is defined by
 \begin{equation}\label{JE}
   J^{\rm E}(\dbM,\Cbf)
   :=  \dbE^{\dbP}\bigg[\cK^{\nu}_{\tau^{\scriptscriptstyle\rm P}} U_{\rm A}(\xi)
                                    +\int_0^{\tau^{\scriptscriptstyle\rm P}} \mathcal K^{\nu}_t \big(U_{\rm A}(\pi_t)-c_t(\nu_t)\big)dt \bigg]
   \,\,\,\mbox{for all}\,\,\,
   \dbM=(\dbP,\nu)\in\cM.
 \end{equation}
Throughout this paper, we adopt the convention $\infty-\infty=-\infty$, implying that the above expectation $J^{\rm E}(\dbM,\Cbf)$ is well-defined. 
The European agent aims at optimally choosing the effort, given the promised compensation contract $\Cbf$:
 \beaa\label{EuroAgent}
    V^{\rm E}(\Cbf) := \sup_{\dbM\in\cM} J^{\rm E}(\dbM,\Cbf), \quad \Cbf\in\mathfrak{C},
 \eeaa
with the convention $\sup\emptyset=-\infty$, which also prevails throughout this paper. 

A control model $\widehat{\dbM}=(\widehat{\dbP},\widehat{\nu})\in\cM$ is an {\it optimal response} to contract $\Cbf$ if $V^{\rm E}(\Cbf)=J^{\rm E}\big(\widehat{\dbM},\Cbf\big)$. 
We denote by $\widehat{\cM}^{\rm E}(\Cbf)$ the (possibly empty) set of all such optimal control models.

\vspace{3mm}

\no {\bf Agent can quit.} 
We now introduce a new setting which we name as that of the {\it American contracting problem}. We assume that the agent may chose a retirement time $\tau$ before the contract terminates.  
After retirement, the agent receives no more transfers from the principal, i.e., $\xi=0$ and $\pi=0$ on $\{t\ge\t\wedge\tau^{\scriptscriptstyle\rm P}\}$. 
As $c_t(0)=0$ and $k_t(0)=k_0$, the (dynamic) value function of the agent at retirement is given by 
 \beaa
 \int_{\t\wedge\t^{\scriptscriptstyle\rm P}}^\infty e^{-k_0(t-\t\wedge\t^{\scriptscriptstyle\rm P})} U_{\rm A}(0)dt
   =
 \frac{U_{\rm A}(0)}{k_0}
   =:
 U_{\rm A}(\rho).
 \eeaa
Given this definition of the constant $\rho$, we denote by $\frakC^{\rm A}$ the collection of all pairs $\Cbf^{\rm A}=(\tau^{\scriptscriptstyle\rm P},\pi)$ such that $\Cbf:=(\Cbf^{\rm A},\rho)\in\frakC$. 
We denote by $\cM^{\rm A}$ the collection of all decision variables $(\tau,\dbM)$ for the agent, where $\t$ is an $\dbF$-stopping time, and $\dbM=(\dbP,\nu)\in\cM$. 
The American agent's objective function is defined by
 \begin{equation*}
   J^{\rm A}(\t, \dbM, \Cbf^{\rm A})
    :=
   \dbE^{\dbP}\bigg[\cK^{\nu}_{\tau\we\t^{\scriptscriptstyle\rm P}} U_{\rm A}(\rho)
                               +\int_0^{\tau\we\t^{\scriptscriptstyle\rm P}} \mathcal K^{\nu}_t \big(U_{\rm A}(\pi_t)-c_t(\nu_t)\big)dt \bigg]
                               \quad\mbox{for all}~(\t,\dbM)\in\cM^{\rm A},
 \end{equation*}
and aims at optimally choosing the effort and the quitting time, given the promised compensation contract $\Cbf^{\rm A}$:
 \bea
 V^{\rm A}\big(\Cbf^{\rm A}\big) 
  :=
 \sup_{(\t,\dbM)\in\cM^{\rm A}} J^{\rm A}\big(\t, \dbM,\Cbf^{\rm A}\big), 
 \quad \Cbf^{\rm A}\in\frakC^{\rm A}.
 \eea
We say that $\big(\widehat\t,\widehat{\dbM}\big)\in\cM^{\rm A}$ is an optimal response to contract $\Cbf^{\rm A}$ if $V^{\rm A}(\Cbf^{\rm A})=J^{\rm A}\big(\widehat\t,\widehat{\dbM},\Cbf^{\rm A}\big)$. 
We denote by $\widehat{\cM}^{\rm A}(\Cbf^{\rm A})$ the (possibly empty) set of all such optimal responses.

\subsection{Principal's problem}
The contracts which can be offered by the principal are those admissible contracts which are subject to the additional restriction:
 \begin{align} \label{def:FrakC}
 	 \mathfrak{C}^{\rm a}_R := \big\{ \Cbf\in\frakC^a : V^a(\Cbf)\geq U_{\rm A}(R)\big\}, 
 	  \quad {\rm a}\in\{{\rm E},{\rm A}\},
 \end{align}
where $\frakC^{\rm E}:=\frakC$, and $R$ is a given participation threshold representing the minimum satisfaction level required by the agent in order to accept the contract.

The principal benefits from the value of the output $X$ and pays the agent as promised in the contract $\Cbf$, namely, she pays a continuous compensation at the rate $\pi$, and 
\begin{itemize}
 \item in case of a European contract, a final compensation $\xi$ at the termination time $\tau^{\scriptscriptstyle\rm P}$,
 \item in case of an American agent, $\xi=0$ at the agent quitting time $\widehat\tau\wedge\tau^{\scriptscriptstyle\rm P}$. 
\end{itemize}

\no This leads to the following definitions for the second-best principal's problem under European and American contracts, respectively: 
 \begin{align*}
 	V^{\rm PE} &:= \sup_{\Cbf\in\mathfrak{C}^{\rm E}_R} 
 	\sup_{\dbM\in\widehat{\cM}^{\rm E}(\Cbf)}
 	J^{\rm P}(\dbM,\Cbf), \\
 	V^{\rm PA} &:= \sup_{(\tau^{\scriptscriptstyle\rm P},\pi)\in\mathfrak{C}^{\rm A}_R} 
 	\sup_{(\tau,\dbM)\in\widehat{\cM}^{\rm A}(\tau^{\scriptscriptstyle\rm P},\pi)}
 	J^{\rm P}(\dbM,\tau\wedge\tau^{\scriptscriptstyle\rm P},\pi,0),
 \end{align*}
where, for all $\Cbf=(\tau,\pi,\xi)$:
 \begin{align*}
 	J^{\rm P}(\dbM,\Cbf) 
 	:=
 	\dbE^{\dbP}\bigg[\cK^{\rm P}_\tau U_{\rm P}\big(\ell_\tau-\xi\big) + \int_0^\tau \cK^{\rm P}_rU_{\rm P}(-\pi_r)dr\bigg].
 \end{align*}
Here, $U_{\rm P}:\dbR\rightarrow \dbR$ is a given nondecreasing utility function, $\ell:\Omega\rightarrow \dbR$ is a liquidation function with linear growth, $\ell_\tau:=\ell(X_{.\wedge\tau})$, and $\cK^{\rm P}_t:=e^{-\int_0^t k^{\rm P}_rdr}$, $t\geq 0$, is a discount factor, defined by means of a discount rate function 
   $$ k^{\rm P}:\dbR_+\times\Omega\rightarrow \dbR~~\mbox{ bounded and $\dbF$-optional}. $$
By our convention $\sup\emptyset=-\infty$, notice that the principal only offers those admissible contracts which induce a nonempty set of optimal responses, i.e. $\widehat{\cM}^{\rm E}(\Cbf)\neq\emptyset$ in the European case and $\widehat{\cM}^{\rm A}(\tau^{\rm P},\pi)\neq\emptyset$ in the American case. We also observe that, following the standard economic convention, the above definition of the principal's criterion assumes that, in the case where the agent is indifferent between various optimal responses, he implements the one that is the best for the principal. 
  
%By the convention that $\sup\emptyset = -\infty$, we always have 
%  $$ \sup_{(\widehat{\dbP},\widehat{\nu})\in \widehat{\cM}(\Cbf)}\!\!\dbE^{\widehat{\dbP}}\bigg[\cK^{\rm P}_\tau U\big(\ell_\tau-\xi\big) - \int_0^\tau \cK^{\rm P}_rU(\pi_r)dr\bigg] 
%     = \sup_{\substack{(\widehat{\dbP},\widehat{\nu})\in \widehat{\cM}(\Cbf) \\ \widehat{\cM}(\Cbf)\neq\emptyset}} \!\!\dbE^{\widehat{\dbP}}\bigg[\cK^{\rm P}_\tau U\big(\ell_\tau-\xi\big) - \int_0^\tau \cK^{\rm P}_rU(\pi_r)dr\bigg]. $$

\begin{remark}
Careful readers may have noted that in this paper we analyze both the cases where the agent can or cannot quit the contract. However, we always allow the principal to end the contract at a chosen time $\t$. What would happen if the principal has no right to end the contract before the maturity? In fact,  the principal can always induce the agent to quit the contract at a stopping time $\t$ chosen by the principal (without ending the contract herself), by offering low payment at any time $t\neq \tau$. We refer the interested readers to Remark 2.2 in Cvitani\'c, Wan and Zhang \cite{CWJ08} for detailed discussion. 
\end{remark}

\subsection{Admissible contracts}
\label{sect:admissiblecontracts}

We now provide the precise definition of the set of admissible contracts $\frakC$.
We need the following additional notation: 
  $$ \Omega_t^\omega:= \left\{\omega'\in\Omega : \omega|_{[0,t]} = \omega'|_{[0,t]}\right\},
  ~~
  (\om\otimes_t\om')_s:= \Ind_{\{s\le t\}}\om_s+\Ind_{\{s>t\}}(\om_t+\om'_{s-t}), 
  $$
  and 
  $$ \xi^{t,\om}(\om'):=\xi(\om\otimes_t\om'),\quad X^{t,\om}_s(\om'):=X_{t+s}(\om\otimes_t\om'),\quad \vec{\tau}^{t,\om}:=\tau^{t,\om}-t. 
  $$
We also introduce the dynamic version of $\cP$ by considering the controlled SDE on $[t,\infty)$ issued from the path $\omega\in\Omega$:
  $$ 
  \cP(t,\omega)
  :=
  \Big\{\dbP\in \mathfrak{M}_+^1(\Omega_t^\omega): dX^{t,\omega}_s = \sigma^{t,\omega}_s(X^{t,\omega},\beta_s)\big(\lambda^{t,\omega}_s(X^{t,\omega},\alpha_s)ds + dW_s\big), \dbP\mbox{-a.s.}, (\alpha,\beta)\in\mathfrak{U} \Big\}. 
  $$
In particular $\cP=\cP(0,\mathbf{0})$. We shall use the nonlinear expectations $ \cE^{\cP(t,\omega)}[\cdot] := \sup_{\dbP\in\cP(t,\omega)}\cE^{\dbP}[\cdot].$

\begin{definition} \label{assum:xi.pi}
 \begin{enumerate}[{\rm (i)}]
  \item We denote by $\tau\in\cT$ the collection of all stopping times $\tau$ satisfying 
          \begin{equation} \label{eq:tau}
             \lim_{n\to\infty}\cE^{\cP}\big[\Ind_{\{\tau\geq n\}}\big]=0. 
          \end{equation}
  \item An admissible contract is a triple $\Cbf=(\tau,\pi,\xi)$, with $\tau\in\cT$, and
   \begin{equation} \label{contract-growth}
      \cE^{\cP(t,\omega)}\big[\big|e^{\rho\vec{\tau}^{t,\omega}}U_{\rm A}(\xi^{t,\omega})\big|^q\big] + \cE^{\cP(t,\omega)}\bigg[\bigg(\int_0^{\vec{\tau}^{t,\omega}} \big|e^{\rho r}U_{\rm A}(\pi_r^{t,\omega})\big|^2 dr\bigg)^{\frac{q}{2}}\bigg]<\infty,
   \end{equation} 
      for some $q>1$ and $\rho>-\mu$, where 
          $$ \mu = \inf_{u\in A\times B}\inf_{t\geq 0}\essinf_{\omega\in\Omega}k_t(\omega,u). $$
     We denote by $\frakC$ the set of admissible contracts. 
 \end{enumerate}
\end{definition}

The following condition ensures that $J^{\rm E}$ and $J^{\rm A}$ are finite for each contract $\Cbf\in\frakC$.

\begin{assumption} \label{assum:costc}
The cost function $c$ is bounded by $\overline c$ satisfying, for some $\rho>-\mu$ and $q>1$,
 \begin{equation} \label{eq:costc}
  \cE^{\cP(t,\omega)}\bigg[\bigg(\int_0^{\vec{\tau}^{t,\om}} 
                                                 \big|e^{\rho r}\,\overline c_r^{t,\omega}\big|^2 dr
                                         \bigg)^{\frac{q}{2}}\bigg] 
    < \infty,
  ~~\mbox{for all}~~(t,\omega)\in\llbracket 0,\tau\rrbracket,~\mbox{and}~\tau\in\cT.
 \end{equation}
\end{assumption}

\begin{remark}
 For $(t,\omega)=(0,\mathbf{0})$, we have $\cP(t,\omega)=\cP$ and 
  \begin{align*}
  	\cE^{\cP}\bigg[\bigg(\int_0^{\tau} \big|e^{\rho r}\overline c_r\big|^2 dr\bigg)^{\frac{q}{2}}\bigg] 
  	+ \cE^{\cP}\big[|e^{\rho\tau}U_{\rm A}(\xi)|^q\big] 
  	+ \cE^{\cP}\bigg[\bigg(\int_0^\tau |e^{\rho r}U_{\rm A}(\pi_r)|^2 dr\bigg)^{\frac{q}{2}}\bigg]
  	< \infty.
  \end{align*}
\end{remark}

\section{Comparison with first best contracts}
\label{sec:firstbest}

 We first observe that our reduction result of the European contracting problem extends to the case where the set of admissible contracts $\frakC$ is replaced by $\frakC[\mathfrak{T}]:=\{\mathbf{C}=(\tau^{\rm P},\pi,\xi)\in\frakC^{\rm E}_R: \tau^{\rm P}\in\mathfrak{T}\}$, for some subset $\mathfrak{T}$ of finite stopping times.  

In the economics literature, it is well-known that in the risk-neutral agent setting with deterministic maturity $\tau^{\rm P}=T$, the European principal-agent problem reduces to one single optimization problem corresponding to the case where the principal imposes the amount of effort that the agent devotes. 
This is the so-called first-best optimal contract problem where the principal has full power to choose both the contract and the agent effort. 
Under the European contracting rule, the first best risk sharing problem is defined by
 \bea
 V^{\rm PE}_{\rm fb} 
  :=
 \sup_{\substack{(\Cbf,\dbM)\in\mathfrak{C}[\mathfrak{T}]\times\cM \\ J^{\rm A}(\dbM,\Cbf)\geq R}}
                                           \dbE^{\dbP}\bigg[\cK^{\rm P}_\tau U_{\rm P}\big(\ell_\tau-\xi\big) 
                                           + \int_0^\tau \cK^{\rm P}_tU_{\rm P}(-\pi_t) dt\bigg].
 \eea
It is clear that $V^{\rm PE}_{\rm fb}\ge V^{\rm PE}$. Example \ref{ex:fb=sb} below shows that when the termination time $\tau^{\rm P}$ is not deterministic, the first and second best contracting problems do not coincide, in general. In this section, we provide precise conditions under which equality holds. We first need to assume that the agent's discount rate $k$ is independent of the effort, so that the agent discount factor is independent of the effort process: 
 \bea\label{eta}
  \cK_t:=\cK^\nu_t
    &\mbox{is independent of $\nu$, and we denote}&
        \eta_t:=\frac{\cK_t}{\cK^{\rm P}_t},
           ~~t\in [0,T].
 \eea
This condition is necessary in order to identify directly the optimal first best compensations $(\xi,\pi)$ of the principal independently of the agent's effort.

We also assume that the principal's utility function 
 \bea\label{UP}
 U_{\rm P}
 ~\mbox{is}~C^1,
 ~\mbox{increasing, and strictly concave, with}~
 ~U_{\rm P}'(-\infty)=\infty,
 ~U_{\rm P}'(\infty)=0,
 \eea 
and we introduce the corresponding convex conjugate
 $$
 U_{\rm P}^*(y)
 :=
 \sup_{x\in\dbR}\{U_{\rm P}(x)-xy\}
 =
 U_{\rm P}\circ(U'_{\rm P})^{-1}(y)-y(U'_{\rm P})^{-1}(y).
 $$
Finally, we shall denote for any function $F:\dbR_+\longrightarrow\dbR$ with appropriate measurability:
 \beaa
 \mathfrak{J}^F_\tau(\lambda)
   :=
 \cK^{\rm P}_\tau F(\lambda\eta_\tau)
 +\int_0^\tau\cK^{\rm P}_tF(\lambda\eta_t)dt,
 &\lambda\ge 0.&
 \eeaa

\begin{proposition} 
Consider a risk neutral agent, i.e., $U_{\rm A}={\rm Id}_\dbR$, with discount factor satisfying \eqref{eta}, and a principal with utility function satisfying \eqref{UP}. Let $\xi^{\lambda}:=\ell_\tau-(U_{\rm P}')^{-1}(\lambda\eta_\tau)$, $\pi^{\lambda}:=-(U_{\rm P}')^{-1}(\lambda\eta)$, and assume
   \beaa
    \hspace{14mm}
   ({\rm C}1) \hspace{10mm}
    \begin{array}{c}
    \mbox{for all}~\lambda\geq 0,~
    \mbox{the problem}
    ~\displaystyle
          v_{\rm fb}(\lambda)
              := \!\!\sup_{\substack{\tau\in\cT  \\  \dbM\in\cM(\tau,\pi^{\lambda},\xi^{\lambda})}} \!\!
                     \dbE^{\dbP}\left[\mathfrak{J}^{U_{\rm P}^*}_\tau(\lambda) + \lambda \mathfrak{H}_\tau(\nu) \right]
    \\ \displaystyle \mbox{has a solution $(\tau^{\lambda},\dbM^{\lambda})$, with}~
            \mathfrak{H}_\tau(\nu)
               := 
              \cK_\tau\ell_\tau- \!\int_0^\tau \!\cK_t c_t(\nu_t)dt -\!R.
          \end{array}
  \eeaa
\begin{enumerate}
\item[{\rm (1)}] Then, assuming, in addition, that
          \beaa
          ({\rm C}2) \hspace{10mm}
            0
             =
           \dbE^{\dbP^{\widehat\lambda}}
               \Big[\mathfrak{J}^{U_{\rm P}^*}_{\tau^{\widehat\lambda}}\big(\widehat\lambda\big)
                      -\mathfrak{J}^{U_{\rm P}\circ(U_{\rm P}')^{-1}}_{\tau^{\widehat\lambda}}\big(\widehat\lambda\big)
                      + \widehat\lambda \mathfrak{H}_{\tau^{\widehat\lambda}}\big(\nu^{\widehat\lambda}\big)
               \Big],
               &\mbox{for some}&
               \widehat\lambda>0, \hspace{10mm}
          \eeaa
\begin{enumerate}
 \item[{\rm (1-i)}] we have $V^{\rm PE}_{\rm fb}=v_{\rm fb}\big(\widehat\lambda\big)$, with optimal contract and effort 
         $$ \widehat{\mathbf{C}}:=\big(\widehat\tau^{\scriptscriptstyle\rm P},\widehat\pi,\widehat\xi\, \big):=\big(\tau^{\widehat\lambda},\pi^{\widehat\lambda},\xi^{\widehat\lambda}\big)
                \quad \mbox{and} \quad 
            \widehat\dbM:=\dbM^{\widehat\lambda}=\big(\dbP^{\widehat\lambda},\nu^{\widehat\lambda}\big).
            $$
 \item[{\rm (1-ii)}] $V^{\rm PE}=V^{\rm PE}_{\rm fb}$ if and only if $\widehat\dbM$ is also a solution of the problem
          \beaa
             \widehat v
              :=
             \sup_{\dbM\in\cM(\mathbf{\widehat C})}
                \dbE^{\dbP}
               \Big[\mathfrak{J}^{U_{\rm P}^*-U_{\rm P}\circ(U_{\rm P}')^{-1}}_{\tau^{\widehat\lambda}}(\widehat\lambda)
%                     - \mathfrak{J}^{U_{\rm P}\circ(U_{\rm P}')^{-1}}_{\tau^{\widehat\lambda}}(\widehat\lambda)
                    + \widehat\lambda \mathfrak{H}_{\tau^{\widehat\lambda}}(\nu)
               \Big];
          \eeaa
           in this case $\big(\widehat\tau^{\scriptscriptstyle\rm P},\widehat\pi,\widehat\xi\,\big)$ is also a second best optimal contract with optimal effort $\widehat\dbM$.
\end{enumerate}
\item[{\rm (2)}] Let $\mathfrak{T}=\{T\}$ be some fixed deterministic maturity, and let $\cK,\cK^{\rm P}$ be deterministic functions. Then, condition {\rm (C2)} is satisfied, and the problems $v_{\rm fb}$ and $\widehat v$ have the same set of solutions. 
       Consequently, $V^{\rm PE}=V^{\rm PE}_{\rm fb}$, and the first best and second best optimal contracting problems have the same solution.
\end{enumerate}
\end{proposition}

\begin{proof}
  (1-i) \, Let $\mathbf{C}=(\tau,\pi,\xi)$ and $\dbM=(\dbP,\nu)$ satisfy the participation constraint 
    $$ J^{\rm A}(\dbM,\mathbf{C})- R= \dbE^\dbP\left[\cK_{\tau}\xi + \ \int_0^{\tau}\cK_t\big(\pi_t-c_t(\nu_t)\big)dt\right] - R
  \ge 0. $$ 
  As $\widehat\lambda\ge 0$, as defined in condition (C2), we have,
  \allowdisplaybreaks
   \begin{align}
     J^{\rm P}(\dbM,\mathbf{C}) 
         &\leq \dbE^\dbP\bigg[\cK^{\rm P}_{\tau} U_{\rm P}(\ell_\tau-\xi) 
                                            +\!\! \int_0^{\tau}\!\!\!\!\cK^{\rm P}_t U_{\rm P}(-\pi_t)dt 
         + \widehat\lambda\left(\cK_{\tau}\xi 
                                             +\!\! \int_0^{\tau}\!\!\!\!\cK_t\big(\pi_t-c_t(\nu_t)\big)dt - R\right)\bigg] 
       \label{ineq:1} \\
         &= \dbE^\dbP\bigg[\cK^{\rm P}_{\tau}\big\{ U_{\rm P}(\ell_\tau-\xi) - \widehat\lambda\eta_{\tau}(\ell_\tau-\xi)\big\} + \int_0^{\tau}\!\!\cK^{\rm P}_t \big\{ U_{\rm P}(-\pi_t) - \widehat\lambda\eta_t(-\pi_t)\big\}dt \nonumber \\
         &\hspace{73mm} +  \widehat\lambda \bigg(\cK_{\tau}\ell_\tau- \int_0^{\tau}\!\!\cK_t c_t(\nu_t)dt - R\bigg) \bigg] \nonumber \\
         &\leq \dbE^\dbP\bigg[\cK^{\rm P}_{\tau} U_{\rm P}^*\big(\widehat\lambda\eta_{\tau}\big) + \int_0^\tau\cK^{\rm P}_t U_{\rm P}^*\big(\widehat\lambda\eta_t\big)dt + \widehat\lambda\mathfrak{H}_\tau(\nu)\bigg] \label{ineq:2} \\
         &= \dbE^\dbP\Big[\mathfrak{J}^{U^*_{\rm P}}_\tau\big(\widehat\lambda\big) + \widehat\lambda \mathfrak{H}_\tau(\nu)\Big] \leq v_{\rm fb}\big(\widehat\lambda\big),  \label{ineq:3}
   \end{align}
where the last inequality follows from the definition of $U^*_{\rm P}$.
By the arbitrariness of $(\tau,\pi,\xi,\dbM)$, this implies that $v_{\rm fb}\big(\widehat\lambda\big)$ defines an upper bound for $V^{\rm PE}_{\rm fb}$. 
Clearly, the contract $\mathbf{\widehat C}=\big(\widehat\tau^{\scriptscriptstyle\rm P},\widehat\pi,\widehat\xi\,\big)$ with effort $\widehat\dbM$, whose existence is guaranteed by condition (C1), restores the equality both in \eqref{ineq:2} and in \eqref{ineq:3}. 
By direct verification, we also see that the choice of $\lambda$ by means of condition (C2) restores equality in \eqref{ineq:1}. 
Hence, the last upper bound is achieved, and therefore $\big(\widehat\tau^{\scriptscriptstyle\rm P},\widehat\pi,\widehat\xi,\widehat\dbM\big)$ is a solution of the first best problem. 

\vspace{4mm}

\noindent (1-ii) \, The inequality $V_{\rm fb}^{\rm PE}\ge V^{\rm PE}$ is obvious. 
 In order to show the equality, a necessary and sufficient condition is that the optimal agent response $\widehat\dbM^{\mathbf{\widehat C}}=\widehat\dbM$, 
     i.e.~the agent's optimal response to the first best optimal contract coincides with the first best optimal effort. 

In order to complete the proof, we now show that given the contract $\big(\widehat\tau^{\scriptscriptstyle\rm P},\widehat\pi,\widehat\xi\,\big)$, the effort $\widehat\dbM=\big(\widehat\dbP,\widehat\nu\big)$ is an optimal response for the agent problem. 
Indeed, we directly compute that
 \begin{align*}
   J^{\rm E}\big(\dbM,\mathbf{\widehat C}\big) 
     &= \dbE^\dbP\left[\cK_{\widehat\tau^{\scriptscriptstyle\rm P}}\widehat\xi + \int_0^{\widehat\tau^{\scriptscriptstyle\rm P}} \cK_t\big(\widehat\pi_t-c_t(\nu_t)\big)dt\right]  \\
     &= \dbE^\dbP\left[\cK_{\widehat\tau^{\scriptscriptstyle\rm P}}\big\{\ell_{\widehat\tau}-(U_{\rm P}')^{-1}\big(\widehat\lambda\eta_{\widehat\tau}\big)\big\} + \int_0^{\widehat\tau^{\scriptscriptstyle\rm P}} \cK_t\big\{-(U_{\rm P}')^{-1}\big(\widehat\lambda\eta_t\big)-c_t(\nu_t)\big\}dt\right]  \\
     &= R + \frac{1}{\widehat\lambda}\dbE^\dbP\bigg[\widehat\lambda\Big( \cK_{\widehat\tau^{\scriptscriptstyle\rm P}}\ell_{\widehat\tau^{\scriptscriptstyle\rm P}} 
     - \int_0^{\widehat\tau^{\scriptscriptstyle\rm P}} \!\!\cK_tc_t(\nu_t)dt - R \Big) 
                           - \cK_{\widehat\tau^{\scriptscriptstyle\rm P}}^{\rm P}\widehat\lambda\eta_{\widehat\tau^{\scriptscriptstyle\rm P}}(U_{\rm P}')^{-1}(\widehat\lambda\eta_{\widehat\tau^{\scriptscriptstyle\rm P}}) 
     \\ & \hspace{77mm}       
     - \int_0^{\widehat\tau^{\scriptscriptstyle\rm P}}\!\cK_t^{\rm P}\widehat\lambda\eta_t (U_{\rm P}')^{-1} \!\big(\widehat\lambda\eta_t\big)dt\bigg]  
     \\
     &= R + \frac{1}{\widehat\lambda}\dbE^\dbP\left[\widehat\lambda\mathfrak{H}_{\widehat\tau^{\scriptscriptstyle\rm P}}(\nu) + \mathfrak{J}^{U_{\rm P}^*-U_{\rm P}\circ(U_{\rm P}')^{-1}}_{\widehat\tau^{\scriptscriptstyle\rm P}}\big(\widehat\lambda\big)\right]
      \leq R+\frac{\widehat v}{\widehat\lambda},
 \end{align*}
% 
%  \begin{eqnarray*}
%  J^E(\dbM,\mathbf{\widehat C})
%  &=&
%  \dbE^\dbP\Big[H_{\hat\tau}\hat\xi-C^\nu+\int_0^{\hat\tau} H_t\hat\pi_tdt)\Big]
%  \\
%  &=&
%  \dbE^\dbP\Big[H_{\hat\tau}(\ell-(U_{\rm P}')^{-1}(\hat\lambda\eta_{\hat\tau})-C^\nu-\int_0^{\hat\tau} H_t(U_{\rm P}')^{-1}(\hat\lambda\eta_t)dt)
%                  \Big]
%  \\
%  &=&
%  \frac{1}{\hat\lambda}
%  \dbE^\dbP\Big[\cK^{\rm P}_{\hat\tau}(-\hat\lambda\eta_{\hat\tau})(U_{\rm P}')^{-1}(\hat\lambda\eta_{\hat\tau})-
%                           + \int_0^{\hat\tau} H_t(-\hat\lambda\eta_t)(U_{\rm P}')^{-1}(\lambda\eta_t)dt)
%                           +\hat\lambda(H_{\hat\tau}-C^\nu)
%                   \Big]
%  \\
%  &=&
%  R+\frac{1}{\hat\lambda} \dbE^{\dbP}
%  \Big[ \mathfrak{J}^{U_{\rm P}^*-U_{\rm P}\circ(U_{\rm P}')^{-1}}_{\tau^{\hat\lambda}}(\hat\lambda)
%          + \hat\lambda \mathfrak{H}_{\tau^{\hat\lambda}}(\nu)
%   \Big]
%  \;\le\;
%  R+\frac{\hat v}{\hat\lambda},
%  \end{eqnarray*}
where we used the fact that $U_{\rm P}^*(y)=U_{\rm P}\circ(U_{\rm P}')^{-1}(y)-y(U_{\rm P}')^{-1}(y)$. 
By the arbitrariness of $\dbM\in\cM(\mathbf{\widehat C})$ this provides the upper bound $V^{\rm E}(\mathbf{\widehat C})\le R+\frac{\widehat v}{\widehat\lambda}$, which is achieved by the maximizer of $\widehat v$. 
Hence, a necessary and sufficient condition for equality between the first and the second best contracting problems is that the optimal first best effort $\widehat\dbM$ is also a maximizer of $\widehat v$.

\vspace{4mm}

\noindent (2) \, In the present setting, notice that $\mathfrak{J}^{U_{\rm P}^*}_T(\lambda)$ and $\mathfrak{J}^{U_{\rm P}\circ(U_{\rm P}')^{-1}}_T(\lambda)$ are deterministic. Then, 
  \begin{align*}
  	v_{\rm fb}(\lambda)
  	=
  	\mathfrak{J}^{U_{\rm P}^*}_T(\lambda)
  	+\lambda \sup_{\dbM\in\cM(T,\pi^{\lambda},\xi^{\lambda})} 
  	\dbE^{\dbP}\big[\mathfrak{H}_T(\nu)\big].
  \end{align*}
Similarly, we have
  \begin{align*}
  	\widehat v
  	=
  	\mathfrak{J}^{U_{\rm P}^*-U_{\rm P}\circ(U_{\rm P}')^{-1}}_T(\widehat\lambda)
  	+ \widehat\lambda
  	\sup_{\dbM\in\cM(\mathbf{\widehat C})}
  	\dbE^{\dbP}\big[\mathfrak{H}_{T}(\nu) \big],
  \end{align*}
 which reduces to the same maximization problem as in $v_{\rm fb}(\widehat\lambda)$. Let us finally check that condition (C2) is verified. 
Indeed, notice that in the present case, the optimal controls $(\widehat\tau,\widehat\dbM)=(\tau^\lambda,\dbM^\lambda)$ are independent of $\lambda$. 
Condition (C2) reduces to 
 \begin{align*}
 	0
 	=
 	\dbE^{\widehat\dbP}
 	\Big[\mathfrak{J}^{U_{\rm P}^*}_T(\widehat\lambda)
 	-\mathfrak{J}^{U_{\rm P}\circ(U_{\rm P}')^{-1}}_T(\widehat\lambda)
 	+ \widehat\lambda \mathfrak{H}_T(\widehat\nu)
 	\Big]
 	=
 	V^{\rm PE}_{\rm fb}
 	-\mathfrak{J}^{U_{\rm P}\circ(U_{\rm P}')^{-1}}_T(\widehat\lambda). 
 \end{align*}

Hence, the existence of a unique solution to the last equation follows from our condition \eqref{UP} on the principal's utility function.
\end{proof}

We conclude this section with an example of European contracting problem with risk neutral agent and one single possible stopping $\mathfrak{T}=\{\tau^{\rm P}\}$, where the first best and second best coincide for deterministic $\tau^{\rm P}$, but do not coincide, in general, in the context of a random horizon $\tau^{\rm P}$.

\begin{example}\label{ex:fb=sb}
The Holmstr\"om and Milgrom contracting problem models the output process under effort $\alpha$ by the dynamics 
  $$ dX_t=(rX_t+\alpha_t)dt+dW^\alpha_t, \quad \dbP^\alpha\mbox{-a.s.} $$
We consider the following random horizon extension of the criteria for the agent and the principal, respectively:
 \begin{align*}
 	J(\xi,\alpha)
 	:=\dbE^{\dbP^\alpha}\bigg[\xi e^{-r\tau^{\rm P}}-\frac12\int_0^{\tau^{\rm P}} e^{-rt}\alpha_t^2 dt\bigg],
 	\quad \mbox{and} \quad 
 	J_{\rm P}(\xi,\alpha)
 	:=
 	\dbE^{\dbP^\alpha}\left[e^{-r\tau^{\rm P}} U_{\rm P}(X_T-\xi)\right].
 \end{align*}
Following the same argument as in the previous proof, we find the first best optimal contract is $\widehat\xi:=X_T-(U_{\rm P})^{-1}(\widehat\lambda)$, with corresponding constant optimal effort $\widehat\alpha_t=1$ for all $t\le\tau^{\rm P}$, where the Lagrange multiplier $\hat\lambda$ is the solution of:
 \begin{align*}
   R &= \dbE^{\dbP^{\widehat\alpha}}\Big[e^{-r\tau^{\rm P}}\Big(X_T-(U_{\rm P}')^{-1}\big(\widehat\lambda\big)\Big) -\frac12\big(1-e^{-r\tau^{\rm P}}\big)
   \Big] \\
   & =\frac12\Big(1-\dbE^{\dbP^{\widehat\alpha}}\big[e^{-r\tau^{\rm P}}\big]\Big)
   -\dbE^{\dbP^{\widehat\alpha}}\big[e^{-r\tau^{\rm P}}\big](U_{\rm P}')^{-1}(\widehat\lambda).
 \end{align*}
In order to check whether the first and second best contracts coincide, we need only verify whether the agent's optimal response to the first best contract $\widehat\xi$ is also the unit constant $\widehat\alpha$. Direct calculation provides 
 $$
 J(\widehat\xi,\alpha)
 =
 \dbE^{\dbP^\alpha}\bigg[-e^{-r\tau^{\rm P}}(U_{\rm P}')^{-1}(\widehat\lambda)
                                       +\int_0^{\tau^{\rm P}} e^{-rt}\Big(\alpha_t-\frac12\alpha_t^2\Big) dt\bigg].
 $$ 
For a deterministic finite horizon $\tau^{\rm P}=T$, the maximum of $J(\widehat\xi,\alpha)$ is achieved by the constant unit effort process $\widehat\alpha$, thus proving the identity between the first and the second best problem. 

However, this is not the case anymore for random horizon $\tau^{\rm P}$, in general. Consider, for instance, the example $\tau^{\rm P}:=\inf\{t>0: X_t\le 0\}$, with $X_0>0$. By standard control theory, the HJB equation corresponding to this problem is 
  $$ rv-\frac12 v''-\sup_a\left\{a(1+v')-\frac12a^2\right\}=0, \quad \mbox{on~}\dbR_+, $$ 
  with boundary condition $v(0)=-(U_{\rm P}')^{-1}(\widehat\lambda)$. Suppose to the contrary that the unit constant effort $\alpha$ is optimal, then $\hat a=1$ must be the maximizer in the last HJB equation which happens if and only if $v'=0$, meaning that the value function $v$ is constant, but the HJB equation then reduces to $v=\frac1{2r}$ which does not match the boundary condition at the origin.
\end{example}

\section{Reduction to a standard stochastic control problem}
\label{sec:mainresult}

In this section, we extend the result of Cvitani\'c, Possama\"i, and Touzi \cite{CPT18} to the present random horizon setting. The key argument, introduced by Sannikov \cite{Sannikov08},
is to reduce the principal optimization problem by using the dynamic programming representation of the agent's value process. As is standard in stochastic control theory, such a representation involves the agent's (path-dependent) Hamiltonian:
   \begin{equation} \label{eq:defH}
     H_t(\omega,y,z,\gamma) := \underset{u\in A\times B}{\sup} h_t(\omega,y,z,\gamma,u);~~(t,\omega)\in[0,\infty)\times\Omega,~(y,z,\gamma)\in\dbR\times\dbR^d\times\cS_d(\dbR),
   \end{equation}
   where $\mathcal S_d(\dbR)$ is the set of symmetric matrices in $\cM_{d}(\dbR)$, and for $u=(a,b)\in A\times B$
   \begin{equation} \label{eq:defh}
     h_t(\omega,y,z,\gamma,u):= -c_t(\omega,u)-k_t(\omega,u)y + \sigma_t(\omega,b)\lambda_t(\omega,a)\cdot z + \frac{1}{2}{\rm Tr}\big[(\sigma_t\sigma_t^\top)(\omega,b)\gamma\big],
   \end{equation}
where ${\rm Tr}[M]$ denotes the trace of a matrix $M\in\mathcal M_d(\dbR)$.

We next introduce for an arbitrary initial value $Y_0\in\dbR$ and $\dbF$-predictable processes $(Z,\Gamma)$ with values in $\dbR^d\times\cS_d(\dbR)$ the process $Y^{Y_0,Z,\Gamma}$ defined by the random ODE:
 \begin{equation} \label{def:Y}
           Y^{Y_0,Z,\Gamma}_t 
           :=
           Y_0 + \int_0^t \left(Z_r\cdot dX_r+\frac12{\rm Tr}\big[\Gamma_rd\langle X\rangle_r\big]
                                 -H_r\big(Y^{Y_0,Z,\Gamma}_r,Z_r,\Gamma_r\big)dr-U_{\rm A}(\pi_r)dr\right)
         \end{equation}
  under appropriate integrability. We shall see that the process $Y^{Y_0,Z,\Gamma}$ turns out to represent the agent's value process, 
  and will be shown to be a convenient parameterization of the contracts by setting $(\tau^{\scriptscriptstyle\rm P},\pi,\xi)=(\tau^{\scriptscriptstyle\rm P},\pi,\xi^{Y_0,Z,\Gamma})$ with $\xi^{Y_0,Z,\Gamma}:=U_{\rm A}^{-1}\big(Y^{Y_0,Z,\Gamma}_{\tau^{\scriptscriptstyle\rm P}}\big)$.

\begin{definition} \label{def:V}
We denote by $\cV$ the collection of all such processes $(Z,\Gamma)$ satisfying, in addition, the following:
 \begin{enumerate}[{\rm (i)}]
  \item $\|Z\|_{\cH^p_{\alpha,\tau^{\scriptscriptstyle\rm P}}(\cP)} 
            + \|Y^{Y_0,Z,\Gamma}\|_{\cD^p_{\alpha,\tau^{\scriptscriptstyle\rm P}}(\cP)}<\infty$, for some $p>1$ and $\alpha\in\dbR$.
  \item There exists a weak solution $\big(\dbP^{Y_0,Z,\Gamma},\nu^{Y_0,Z,\Gamma}\big)\in\cM$ such that
         \begin{equation} \label{subgrad}
           H_t(Y_t,Z_t,\Gamma_t) = h_t\big(Y_t,Z_t,\Gamma_t,\nu^{Y_0,Z,\Gamma}_t\big),
           ~~dt\otimes\dbP^{Y_0,Z,\Gamma}\mbox{-a.e. on}~~
           \llbracket 0,\t^{\scriptscriptstyle\rm P}\rrbracket.
         \end{equation} 
 \end{enumerate}
\end{definition}

Condition (i) guarantees that the process $Y^{Y_0,Z,\Gamma}$ of \eqref{def:Y} is well-defined $\dbP$-a.s.~for all $\dbP\in\cP$. 
First, as $k$ is bounded, the Hamiltonian $H$ is Lipschitz in the $y$ variable. 
It guarantees that $Y^{Y_0,Z,\Gamma}$ is well-defined as the unique solution of the ODE with random coefficients \eqref{def:Y}, provided that the integrals are well-defined. 
Moreover, as in \cite{CPT18}, the integrals are indeed well-defined, without further condition on the process $\Gamma$, as we see by applying It\^o's formula that 
  \begin{equation} \label{YZK}
   \cK^{\nu}_tY^{Y_0,Z,\Gamma}_t + \int_0^t \cK^{\nu}_r\big(U_{\rm A}(\pi_r) - c_r(\nu_r)\big)dr = Y_0 + \int_0^t \cK^{\nu}_rZ_r\cdot\sigma_r(\beta_r)dW^\dbP_r-A^\nu_t, ~t\leq\tau,~\dbP\mbox{-a.s.}, 
  \end{equation}
  for all $(\dbP,\nu)\in\cM$, where $A^\nu:= \int_0^. \cK^{\nu}_r\big[H_r-h_r(.,\nu_r)\big]\big(Y_r^{Y_0,Z,\Gamma},Z_r,\Gamma_r\big)dr$ is a nondecreasing process. 
  Due to Assumption \ref{assum:costc} and the admissibility condition \eqref{contract-growth}, the first integral is well-defined. 
  Now, the only issue is with the existence of the stochastic integral $\int_0^.\cK_r^\nu Z_r\cdot\sigma_r(\beta_r)dW^\dbP_r$ under each $\dbP\in\cP$. 
We emphasize that, as a consequence of the main result of Nutz \cite{Nut12}, the stochastic integral $\int_0^.\cK_r^\nu Z_r\cdot dX_r$ is defined pathwisely on $\Omega$ without exclusion of any null set. 
This is a crucial fact as our main result below states that the principal's problem can be reduced to choosing among contracts of the form $\big(\tau^{\scriptscriptstyle\rm P},\pi,U_{\rm A}^{-1}(Y^{Y_0,Z,\Gamma}_{\tau^{\scriptscriptstyle\rm P}})\big)$, 
   which requires that such contracts be independent from the agent's control model.

Condition (ii) states the existence of a maximizer of the Hamiltonian $H$, defined in \eqref{eq:defH}, that induces an admissible control model for the agent's problem. 
The existence of a maximizer is a standard condition in the verification argument in stochastic control theory, which allows one to identify the optimal control. 
As in \cite{CPT18}, we shall see that, given $\Cbf=\big(\tau^{\scriptscriptstyle\rm P},\pi,U_{\rm A}^{-1}(Y^{Y_0,Z,\Gamma}_{\tau^{\scriptscriptstyle\rm P}})\big)$, the process $Y^{Y_0,Z,\Gamma}$ is the dynamic value function of the agent's control problem, 
   and is precisely expressed in the required It\^o decomposition form \eqref{def:Y}. 
In particular, $Y_0=V^{\rm E}(\Cbf)$. 
As the principal problem restricts to those admissible contracts which induce existence for the agent's problem $\widehat{\cM}^{\rm E}(\Cbf)\neq\emptyset$, 
   condition (ii) is necessary to characterize the agent's optimal response which needs to be plugged in the principal's problem $V^{\rm PE}$. 
A similar discussion applies to the American principal-agent problem.
 
By Condition (ii) together with the continuity of $h$, we deduce from a classical measurable selection argument (see e.g.~\cite{Benes70, Benes71}), 
   the existence of measurable maps $\widehat u_t(\omega,y,z,\gamma):=(\widehat\alpha,\widehat\beta)_t(\omega,y,z,\gamma)$ which maximize $H$
     $$ H_t(\omega,y,z,\gamma) = h_t\big(\omega,y,z,\gamma,\widehat u_t(\omega,y,z,\gamma)\big). $$
We next denote by $\widehat{\mathcal U}$ the collection of all such measurable maximizers, and we introduce the optimal feedback controls
     $$ \widehat{\nu}^{Y_0,Z,\Gamma}_t:=\widehat u_t\big(X,Y^{Y_0,Z,\Gamma}_t,Z_t,\Gamma_t\big), 
     $$ 
which induce the following coefficients for the optimal output process     
   \begin{equation*}
     \widehat{\lambda}_t(\omega,y,z,\gamma) := \lambda_t\big(\omega,\widehat{\alpha}_t(\omega,y,z,\gamma)\big), \quad 
     \widehat{\sigma}_t(\omega,y,z,\gamma) := \sigma_t\big(\omega, \widehat{\beta}_t(\omega,y,z,\gamma)\big).
   \end{equation*}
By Condition (ii) of Definition \ref{def:V}, it follows that for all $(Z,\Gamma)\in\cV$ and any $\widehat u\in\widehat{\mathcal U}$, the following stochastic differential equation driven by a $d$-dimensional Brownian motion $W$
  \begin{equation} \label{eq:OptimalSDE}
    X_t = X_0 +\int_0^t \widehat{\sigma}_r(X,Y_r^{Y_0,Z,\Gamma},Z_r,\Gamma_r)\big(\widehat{\lambda}_r(X,Y_r^{Y_0,Z,\Gamma},Z_r,\Gamma_r)dr + dW_r\big),~~t\leq\tau,
  \end{equation}
  has at least one weak solution $\widehat{\dbM}^{Y_0,Z,\Gamma}=(\widehat{\dbP}^{Y_0,Z,\Gamma},\widehat{\nu}^{Y_0,Z,\Gamma})$. 
Our main result is the following extension of Cvitani\'c, Possama\"{\i}, and Touzi \cite{CPT18} reduction result to the present random horizon context. 
Recall the notation $\xi^{Y_0,Z,\Gamma}:=U_{\rm A}^{-1}(Y^{Y_0,Z,\Gamma}_{\tau^{\scriptscriptstyle\rm P}})$ for $(Y_0,Z,\Gamma)$ ranging in $\dbR\times\cV$.

\begin{theorem} \label{thm:main}
Assume that $\cV\neq\emptyset$. Then, 
\begin{enumerate}[{\rm (i)}]
 \item $V^{\rm PE} = \sup_{Y_0\ge R} \underline{V}^{\rm PE}(Y_0)$, where
   \begin{equation*}
     \underline{V}^{\rm PE}(Y_0) 
      := 
     \sup_{\substack{(\tau^{\scriptscriptstyle\rm P},\pi)\in\cT\times\Pi  \\  (Z,\Gamma)\in\cV}}
     \sup_{(\dbP,\nu)\in\widehat{\cM}^{\rm E}\big(\tau^{\scriptscriptstyle\rm P},\pi,\xi^{Y_0,Z,\Gamma}\big)} \!\!\!
     \dbE^{\dbP}\bigg[\cK^{\rm P}_{\tau^{\scriptscriptstyle\rm P}} U_{\rm P}\big(\ell_{\tau^{\scriptscriptstyle\rm P}}-\xi^{Y_0,Z,\Gamma}\big)
                                 +\int_0^{\tau^{\scriptscriptstyle\rm P}}\!\!\!\cK_r^{\rm P}U_{\rm P}(-\pi_r)dr\bigg].
   \end{equation*}
   Moreover, if $(Y^*_0,Z^*,\Gamma^*,\tau^*,\pi^*)$ is a solution of the last optimal control problem, then the triple $(\tau^*,\pi^*,\xi^{Y^*_0,Z^*,\Gamma^*})$ is an optimal contract for the European principal-agent problem.
 \item $V^{\rm PA} = \sup_{Y_0\ge R} \underline{V}^{\rm PA}(Y_0)$,
        where, denoting $\mbox{\rm\sc h}_0:=\mbox{\rm\sc h}_0^{Y_0,Z,\Gamma}:= \inf \big\{t \ge 0:\, Y^{Y_0,Z,\Gamma}_t \le U(\rho) \big\}$,
          \begin{equation*}
             \underline{V}^{\rm PA}(Y_0) 
               := 
             \sup_{\substack{\pi \in \Pi \\ (Z,\Gamma)\in\cV}} 
             \sup_{(\dbP,\nu)\in\widehat{\cM}^{\rm A}(\mbox{\rm\sc h}_0,\pi)}
                       \dbE^{\dbP}\bigg[\cK^{\rm P}_{\mbox{\rm\sc h}_0} U_{\rm P}\big(\ell_{\mbox{\rm\sc h}_0} \big)
                                 +\int_0^{\mbox{\rm\sc h}_0} \cK_r^{\rm P}U_{\rm P}(-\pi_r)dr\bigg].
          \end{equation*}
   Moreover, if $(Y^*_0,Z^*,\Gamma^*,\pi^*)$ is a solution of the last optimal control problem, then denoting $\tau^*:=\mbox{\rm\sc h}_0^{Y^*_0,Z^*,\Gamma^*}$, 
       the pair $(\tau^*,\pi^*)$ is an optimal contract for the American principal-agent problem.
\end{enumerate}

\begin{remark}
Once proving the main theorem above, we can treat the principal's problem as standard stochastic control problem, using dynamic programming arguments. If the coefficients of the principal's problem are Markovian, then the dynamic programming principle links the control problem to the HJB (see e.g.~the examples in Section \ref{sec:example}). Otherwise, in case that the coefficients are path-dependent, other tools such as backward stochastic differential equation (BSDE) \cite{EKPQ97}, second-order BSDE \cite{PTZ18}, backward stochastic PDE \cite{MT2010} and path-dependent PDE \cite{RTZ2014} can be used to characterize the value functions. 
\end{remark}

\end{theorem}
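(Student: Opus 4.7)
The plan is to follow the Sannikov--\cite{CPT18} reduction scheme, built on two complementary steps: a verification direction showing that every parameterized contract $\big(\tau^{\scriptscriptstyle\rm P},\pi,\xi^{Y_0,Z,\Gamma}\big)$ delivers the agent the value $Y_0$ via the optimal response $\widehat\nu^{Y_0,Z,\Gamma}$, and a density direction showing that every admissible contract with nonempty optimal-response set is of this form with $(Z,\Gamma)\in\cV$. The first yields $V^{\rm PE}\ge\sup_{Y_0\ge R}\underline V^{\rm PE}(Y_0)$, the second the reverse, and together they give (i); (ii) then requires an additional argument to handle the interaction between the principal's termination time and the agent's optimal quitting.

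For the verification step I would start from the It\^o decomposition \eqref{YZK}, which shows that for any $(\dbP,\nu)\in\cM$ the process $\cK^{\nu}Y^{Y_0,Z,\Gamma}+\int_0^{\cdot}\cK^{\nu}_r(U(\pi_r)-c_r(\nu_r))dr+A^\nu$ is a $\dbP$-local martingale starting from $Y_0$, with $A^\nu$ non-decreasing, and $A^\nu\equiv 0$ precisely when $\nu$ maximizes the Hamiltonian pathwise, i.e.~$\nu=\widehat\nu^{Y_0,Z,\Gamma}$ by Definition~\ref{def:V}(ii). Promoting this to a true martingale up to $\tau^{\scriptscriptstyle\rm P}$ by a localization and uniform-integrability argument grounded in the $\cD^p_{\alpha,\tau^{\scriptscriptstyle\rm P}}$--$\cH^p_{\alpha,\tau^{\scriptscriptstyle\rm P}}$ integrability of Definition~\ref{def:V}(i), Assumption~\ref{assum:costc}, and \eqref{contract-growth}, taking expectations and using monotonicity of $A^\nu$ yields $J^{\rm E}(\dbM,\Cbf)\le Y_0$ with equality exactly at $\widehat\nu^{Y_0,Z,\Gamma}$. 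This gives $V^{\rm E}(\Cbf)=Y_0$ and $\widehat\dbM^{Y_0,Z,\Gamma}\in\widehat\cM^{\rm E}(\Cbf)$, hence $\Cbf\in\frakC^{\rm E}_R$ whenever $Y_0\ge R$.

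For the density step, given $\Cbf=(\tau^{\scriptscriptstyle\rm P},\pi,\xi)\in\frakC^{\rm E}_R$ with $\widehat\cM^{\rm E}(\Cbf)\neq\emptyset$, I would appeal to the random-horizon second-order BSDE wellposedness result of \cite{LRTY18} applied to data $(U(\xi),-H-U(\pi),\tau^{\scriptscriptstyle\rm P})$ to produce $(Y_0,Z,\Gamma)\in\dbR\times\cV$ with $U(\xi)=Y^{Y_0,Z,\Gamma}_{\tau^{\scriptscriptstyle\rm P}}$ $\cP$-q.s.~and $Y_0=V^{\rm E}(\Cbf)\ge R$. The substantive work here, deferred to Section~\ref{sec:proof}, is the verification that the representation triple inherits both the integrability of Definition~\ref{def:V}(i) and the Hamiltonian-maximizer condition (ii) from the 2BSDE solution; together with the verification inequality above, this closes (i).

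For the American case (ii) the verification direction proceeds as before with the choice $\tau^{\scriptscriptstyle\rm P}=\mbox{\rm\sc h}_0$: by construction $Y^{Y_0,Z,\Gamma}\ge U(\rho)$ on $\llbracket 0,\mbox{\rm\sc h}_0\rrbracket$ with equality at $\mbox{\rm\sc h}_0$, and the classical Snell-envelope argument reveals that the agent's optimal quitting time is $\mbox{\rm\sc h}_0$ while the optimal effort remains $\widehat\nu^{Y_0,Z,\Gamma}$. The genuinely new point is the density direction: given an admissible $\Cbf^{\rm A}=(\tau^{\scriptscriptstyle\rm P},\pi)$ with optimal response $(\widehat\tau,\widehat\dbM)$, I would first show that $Y^{Y_0,Z,\Gamma}_{\widehat\tau\wedge\tau^{\scriptscriptstyle\rm P}}=U(\rho)$ $\widehat\dbP$-a.s.~(since any strict inequality would let the agent strictly improve by perturbing $\widehat\tau$), so that $\widehat\tau\wedge\tau^{\scriptscriptstyle\rm P}=\mbox{\rm\sc h}_0^{Y_0,Z,\Gamma}$; since both actors receive no flow after $\widehat\tau\wedge\tau^{\scriptscriptstyle\rm P}$, $\tau^{\scriptscriptstyle\rm P}$ may be replaced by $\mbox{\rm\sc h}_0$ without altering the principal's payoff, and (ii) follows. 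The hard part of the whole argument will be this last identification of the agent's optimal quitting time with the hitting time of the representing process, the genuinely new step flagged in the introduction as requiring a careful understanding of the principal's termination choice against the agent's optimal-stopping response.
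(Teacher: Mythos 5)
Your treatment of part (i) follows the paper's own route: the verification inequality via the It\^o decomposition \eqref{YZK} together with the density result of Proposition \ref{prop:density}. One correction, though: the density step does \emph{not} produce an exact representation $U(\xi)=Y^{Y_0,Z,\Gamma}_{\tau^{\scriptscriptstyle\rm P}}$ $\cP$-q.s. The 2BSDE representation of the agent's value carries a nondecreasing process $K$ which need not be absolutely continuous, so it cannot in general be absorbed into a $\Gamma$-term; the paper smooths $K$ into $K^\e$ and obtains only an approximating contract $\xi^\e=U^{-1}\big(Y^{Y_0,Z,\Gamma^\e}_{\tau^{\scriptscriptstyle\rm P}}\big)$ with $\widehat\cM^{\rm E}(\Cbf^\e)=\widehat\cM^{\rm E}(\Cbf)$ and $\xi^\e=\xi$ a.s.\ only under the optimal-response measures (where $K=0$). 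This suffices because the principal's criterion only charges those measures, but your claimed q.s.\ identity is generally false; as stated it overstates what \cite{LRTY18} delivers.

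For part (ii) there is a genuine gap in your density direction. You argue from a representing process $Y^{Y_0,Z,\Gamma}$ for the \emph{American} contract, claim $Y^{Y_0,Z,\Gamma}_{\widehat\tau\wedge\tau^{\scriptscriptstyle\rm P}}=U(\rho)$ by a perturbation heuristic, and conclude $\widehat\tau\wedge\tau^{\scriptscriptstyle\rm P}=\mbox{\rm\sc h}_0$. But no such representing process is available at that stage: Proposition \ref{prop:density} (and the 2BSDE machinery behind it) applies to European contracts, i.e.\ only after the agent's quitting time has been identified and frozen, so your argument is circular. Moreover, even granting $Y_{\widehat\tau\wedge\tau^{\scriptscriptstyle\rm P}}=U(\rho)$, this does not imply that $\widehat\tau\wedge\tau^{\scriptscriptstyle\rm P}$ is the \emph{first} time the level $U(\rho)$ is reached; that identification is precisely the optimal-stopping content that must be proved. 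The paper proceeds in the opposite order: it defines $\widehat\tau$ as the first time the right-limit $\overline V^{\rm A}$ of the agent's dynamic value process reaches $U(\rho)$, proves that $\cK^\nu V^{\rm A}$ is a supermartingale under every $(\dbP,\nu)\in\cM$ so that the right-limit exists and $\widehat\tau$ is a stopping time, invokes the classical optimal-stopping result under an optimal $\widehat\dbP$ to establish optimality of $\widehat\tau$ for the agent, replaces $\tau^{\scriptscriptstyle\rm P}$ by $\widehat\tau$ (which leaves the principal's payoff unchanged), and only then applies Proposition \ref{prop:density} to the European contract $(\widehat\tau,\pi,\rho)$. Finally, because $\xi^\e=\rho$ holds only under the optimal-response measures, one must re-express the contract with termination $\widehat\tau^\e:=\widehat\tau\wedge\inf\{t\ge 0: Y^\e_t\le U(\rho)\}$ and check that $\widehat\tau^\e=\widehat\tau$ a.s.\ under each optimal response, which is exactly condition \eqref{verif-American} needed for the verification of Step 1; this last adjustment is missing from your outline, and without it the constructed contract is not of the form $\big(\mbox{\rm\sc h}_0^{Y_0^\e,Z^\e,\Gamma^\e},\pi\big)$ appearing in $\underline V^{\rm PA}$.
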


The key argument for this reduction result is the following density property of the class of contracts $\Cbf=(\tau,\pi,\xi^{Y_0,Z,\Gamma})$.

\begin{proposition} \label{prop:density}
 Let $\Cbf=(\tau,\pi,\xi)\in\mathfrak{C}^{\rm E}_R$. Then, we may find $Y_0^\varepsilon\geq R$ and $(Z^\varepsilon,\Gamma^\varepsilon)\in\cV$ such that, with $\xi^\varepsilon:=U_{\rm A}^{-1}\big(Y^{Y^\varepsilon_0,Z^\varepsilon,\Gamma^\varepsilon}_\tau\big)$, we have 
 $$
 \Cbf^\varepsilon:=(\tau,\pi,\xi^\varepsilon)\in\mathfrak{C}^{\rm E}_R,
 ~~\widehat{\cM}^{\rm E}(\Cbf^\varepsilon)=\widehat{\cM}^{\rm E}(\Cbf),
 ~~\mbox{and}
 ~~\xi^\varepsilon=\xi,~\dbP\mbox{-a.s., for all}~(\dbP,\nu)\in\widehat{\cM}^{\rm E}(\Cbf).
 $$
\end{proposition}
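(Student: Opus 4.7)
The plan is to invoke the random-horizon 2BSDE wellposedness result of the companion paper \cite{LRTY18} and then identify its solution with the pathwise expression \eqref{def:Y}. Concretely, for $\mathbf{C}=(\tau,\pi,\xi)\in\mathfrak{C}^{\rm E}_R$, the dynamic programming principle for the agent's controlled-diffusion problem \eqref{JE} points to the 2BSDE with generator $H_r(y,z,\gamma)+U(\pi_r)$, terminal condition $U(\xi)$ at the random horizon $\tau$, and $\cP$-minimal nondecreasing process $K$:
\begin{equation*}
Y_t = U(\xi) + \int_t^\tau \bigl[H_r(Y_r,Z_r,\Gamma_r) + U(\pi_r)\bigr]\,dr - \int_t^\tau Z_r\cdot dX_r - \tfrac12\int_t^\tau \mathrm{Tr}\bigl[\Gamma_r\,d\langle X\rangle_r\bigr] + K_\tau - K_t.
\end{equation*}
The integrability \eqref{contract-growth} of the contract and Assumption \ref{assum:costc} are precisely the hypotheses needed for \cite{LRTY18}, which delivers the unique solution $(Y,Z,\Gamma,K)$ in the weighted spaces $\cD^p_{\alpha,\tau}(\cP)\times\cH^p_{\alpha,\tau}(\cP)$ together with a suitable space for $K$. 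The participation constraint forces $Y_0 = V^{\rm E}(\mathbf{C})\geq U(R)$.

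I would then verify that the extracted pair $(Z,\Gamma)$ — possibly after a localization producing $(Z^\varepsilon,\Gamma^\varepsilon)$ and a matching adjustment $Y_0^\varepsilon\geq R$ — lies in $\cV$. Condition (i) of Definition \ref{def:V} is inherited from the 2BSDE a priori estimates; condition (ii) follows from a Bene\v{s}-type measurable selection producing a feedback maximizer $\widehat\nu=\widehat u(X,Y,Z,\Gamma)$ of the Hamiltonian, coupled with the weak existence for \eqref{eq:OptimalSDE} via the driftless SDE \eqref{SDEsigma} (non-empty by \eqref{eq:Mnonvide}) followed by the equivalent change of measure of Remark \ref{rem:driftlessSDE}. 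Setting $\xi^\varepsilon:=U^{-1}\bigl(Y^{Y_0^\varepsilon,Z^\varepsilon,\Gamma^\varepsilon}_\tau\bigr)$, subtracting the pathwise identity \eqref{def:Y} from the 2BSDE decomposition yields $Y^{Y_0^\varepsilon,Z^\varepsilon,\Gamma^\varepsilon}_\tau = U(\xi) + K_\tau$ $\cP$-q.s., whence $\xi^\varepsilon\geq\xi$. For any $(\dbP,\nu)\in\widehat{\cM}^{\rm E}(\mathbf{C})$, the attainment of $V^{\rm E}(\mathbf{C})$ combined with the $\cP$-minimality of $K$ forces $K^\dbP\equiv 0$ (the supermartingale in \eqref{YZK} becomes a $\dbP$-martingale), so that $\xi^\varepsilon=\xi$ $\dbP$-a.s.

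Finally, $\xi^\varepsilon\geq\xi$ yields $V^{\rm E}(\mathbf{C}^\varepsilon)\geq V^{\rm E}(\mathbf{C})\geq U(R)$, and together with the integrability of $Y^{Y_0^\varepsilon,Z^\varepsilon,\Gamma^\varepsilon}$ this delivers $\mathbf{C}^\varepsilon\in\mathfrak{C}^{\rm E}_R$. The identity $\widehat{\cM}^{\rm E}(\mathbf{C}^\varepsilon)=\widehat{\cM}^{\rm E}(\mathbf{C})$ follows because \eqref{YZK} shows that for every $\dbM=(\dbP,\nu)\in\cM$ the process $\cK^\nu Y^{Y_0^\varepsilon,Z^\varepsilon,\Gamma^\varepsilon} + \int \cK^\nu(U(\pi)-c(\nu))\,dr$ is a $\dbP$-supermartingale which becomes a martingale exactly when $\nu$ realizes $H$ $dt\otimes\dbP$-a.e., and this is precisely the criterion characterizing optimal responses to $\mathbf{C}$ via 2BSDE minimality. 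The main obstacle is the joint verification of condition (ii) of Definition \ref{def:V} and the exponentially weighted integrability at the random horizon: the feedback maximizer of $H$ must produce a genuinely admissible model whose $\Gamma$ satisfies the integrability required for \eqref{def:Y} to hold pathwise under every $\dbP\in\cP$, and this is where the delicate invocation of \cite{LRTY18} and the stopping/truncation responsible for the superscript $\varepsilon$ become essential.
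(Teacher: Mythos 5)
There is a genuine gap, and it sits exactly where the real work of this proposition lies. You start from a ``2BSDE'' whose generator is $H_r(y,z,\gamma)+U(\pi_r)$ and whose solution would already contain a process $\Gamma$. That is not the object that \cite{LRTY18} provides: the wellposedness result there (and the formulation used in the paper, see \eqref{2BSDE}) concerns a 2BSDE with generator $F_s(y,z,\widehat\sigma^2_s)+U(\pi_s)$ -- the partial Hamiltonian evaluated at the \emph{realized} diffusion -- and delivers a triple $(Y,Z,K)$ with a nondecreasing process $K$ satisfying the minimality condition, and no $\Gamma$ at all. Assuming that the 2BSDE hands you $(Y,Z,\Gamma,K)$ begs the question: the entire content of Proposition \ref{prop:density} is precisely the construction of a bona fide predictable $\Gamma^\varepsilon$ so that the contract can be written in the $K$-free form \eqref{def:Y}. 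Relatedly, your identity $Y^{Y_0^\varepsilon,Z^\varepsilon,\Gamma^\varepsilon}_\tau=U(\xi)+K_\tau$, and the conclusion $\xi^\varepsilon\geq\xi$ drawn from it, rest on this unconstructed $\Gamma$ and do not hold as stated.

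The missing mechanism, which is the heart of the paper's proof, is the following two-step device. First, one mollifies $K$ in time, $K^\varepsilon_t:=\frac1\varepsilon\int_{(t-\varepsilon)^+}^t K_s\,ds$, producing an absolutely continuous nondecreasing process with density $\dot K^\varepsilon$, such that $K^\varepsilon_\tau=0$ iff $K_\tau=0$ under any measure; $(Y^\varepsilon,Z,K^\varepsilon)$ then solves the 2BSDE with terminal condition $\xi^\varepsilon:=U^{-1}(Y^\varepsilon_\tau)$, and by Proposition \ref{prop:optimality.et.K} (characterizing $\widehat\cM^{\rm E}$ through maximization of $F$ plus $K_\tau=0$) one gets $\widehat\cM^{\rm E}(\Cbf^\varepsilon)=\widehat\cM^{\rm E}(\Cbf)$ and $\xi^\varepsilon=\xi$ on the support of every optimal measure. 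Second, one uses that for fixed $(t,\omega,y,z)$ the map $\gamma\mapsto H_t(y,z,\gamma)-F_t(y,z,\widehat\sigma^2_t)-\frac12{\rm Tr}[\widehat\sigma^2_t\gamma]$ is nonnegative, convex, continuous and coercive, hence surjective onto $(0,\infty)$, so a measurable selection yields a predictable $\Gamma^\varepsilon$ with $\dot K^\varepsilon_t=H_t(Y^\varepsilon_t,Z_t,\Gamma^\varepsilon_t)-F_t(Y^\varepsilon_t,Z_t,\widehat\sigma^2_t)-\frac12{\rm Tr}[\widehat\sigma^2_t\Gamma^\varepsilon_t]$; substituting this into the forward equation for $Y^\varepsilon$ absorbs $K^\varepsilon$ into the Hamiltonian and produces exactly the form \eqref{def:Y}, after which the a priori estimates give Definition \ref{def:V}(i) and Proposition \ref{prop:optimality.et.K} gives (ii). Your closing remark that the superscript $\varepsilon$ comes from a ``stopping/truncation'' points in the wrong direction: localization does not remove the nondecreasing process under non-optimal measures, and without representing its density through the $H-F$ gap you cannot reach the class $\cV$.
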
 
  
We postpone the proof of this result to the next section, and we use it now for the proof of Theorem \ref{thm:main} {\rm (i)} and (ii). 

\begin{proof}[Proof of Theorem \ref{thm:main} {\rm (i)}]
We organize the proof in two steps. 
We first establish inequality $V^{\rm PE}\geq\underline{V}^{\rm PE}(Y_0)$ by following the classical verification argument in stochastic control theory, and we next prove equality by using the density result of Proposition \ref{prop:density}.
 
 \vspace{3mm}
 
 \noindent \textbf{Step 1.} We first show that $V^{\rm PE}\geq\underline{V}^{\rm PE}(Y_0)$, for all $Y_0\in\dbR$. 
    Let $(Z,\Gamma)\in\cV$, and fix some stopping time $\tau^{\scriptscriptstyle\rm P}$, and optional process $\pi$ satisfying the integrability condition in \eqref{contract-growth}. 
    The required inequality is a direct consequence of the following two steps. 
    
 \vspace{2mm}  
   
\noindent \textbf{1.a.} We first verify that $\Cbf^{Y_0,Z,\Gamma}=\big(\tau^{\scriptscriptstyle\rm P},\pi,\xi^{Y_0,Z,\Gamma}\big)\in\mathfrak{C}$, $(\dbP^{Y_0,Z,\Gamma},\nu^{Y_0,Z,\Gamma})\in\widehat\cM^{\rm E}\big(\Cbf^{Y_0,Z,\Gamma}\big)$ 
                                    and $Y_0=V^{\rm E}\big(\Cbf^{Y_0,Z,\Gamma}\big)$.
    From the definition of $Y_{\tau^{\scriptscriptstyle\rm P}}^{Y_0,Z,\Gamma}$ in \eqref{def:Y}, it is clear that $\xi^{Y_0,Z,\Gamma}$ is an $\cF_{\tau^{\scriptscriptstyle\rm P}}$-measurable random variable. 
    The integrability of $Y_{\tau^{\scriptscriptstyle\rm P}}^{Y_0,Z,\Gamma}=U_{\rm A}(\xi^{Y_0,Z,\Gamma})$ follows from Definition \ref{def:V} (i). 
 %   Together with condition \eqref{contract-growth}, we obtain all integrability conditions of $\Cbf^{Y_0,Z,\Gamma}$ in the definition of admissible contract.

    For any $\dbM=(\dbP,\nu)\in\cM$, it follows from a direct application of It\^o's formula that 
     \begin{align*}
       \cK^\nu_{\tau^{\scriptscriptstyle\rm P}} Y^{Y_0,Z,\Gamma}_{\tau^{\scriptscriptstyle\rm P}} = Y_0 & + \int_0^{\tau^{\scriptscriptstyle\rm P}} \cK^\nu_r Z_r\cdot\sigma_r^{\beta_r}dW_r^{\dbP} - \int_0^{\tau^{\scriptscriptstyle\rm P}} \cK^\nu_rH_r\big(Y_r^{Y_0,Z,\Gamma},Z_r,\Gamma_r\big)dr - \int_0^{\tau^{\scriptscriptstyle\rm P}} \cK^\nu_rU_{\rm A}(\pi_r)dr \\
                                            & + \int_0^{\tau^{\scriptscriptstyle\rm P}} \cK^\nu_r\Big(-k_r^{\nu_r}Y_r^{Y_0,Z,\Gamma} + Z_r\cdot\sigma^{\beta_r}_r\lambda_r^{\alpha_r} + \frac{1}{2}{\rm Tr}\big[\widehat \sigma_r^2\Gamma_r\big]\Big)dr,
     \end{align*}
     where we used the simplifying notation $\varphi_r^u:=\varphi_r(x,u)$ for $\varphi=k,$ $\sigma,$ $\lambda$.
    As $(Z,\Gamma)\in\mathcal V_0$, the stochastic integral $\int_0^\cdot \mathcal K^{\nu}_r Z_r\cdot \sigma_r^{\beta_r}dW^\dbP_r$ defines a martingale. 
    By the definition of the agent's optimization criterion $J^{\rm E}$ and the definition of $h$, we may write the last equation as
     \begin{align} \label{ineq-A}
       J^{\rm E}\big(\dbM,\Cbf^{Y_0,Z,\Gamma}\big) &= \dbE^\dbP\left[\cK^\nu_{\tau^{\scriptscriptstyle\rm P}} U_{\rm A}\big(\xi^{Y_0,Z,\Gamma}_{\tau^{\scriptscriptstyle\rm P}}\big) + \int_0^{\tau^{\scriptscriptstyle\rm P}} \cK^\nu_r\big(U_{\rm A}(\pi_r)-c_r(\nu_r)\big)dr\right] \nonumber \\
                                             &= Y_0 - \dbE^{\dbP}\left[\int_0^{\tau^{\scriptscriptstyle\rm P}}\cK_r^{\nu}\Big(H_r\big(Y_r^{Y_0,Z,\Gamma},Z_r,\Gamma_r\big) - h_r\big(Y_r^{Y_0,Z,\Gamma},Z_r,\Gamma_r,\nu_r\big)\Big)dr\right].                                 
     \end{align}
    It follows by the definition of $H$ that $J^{\rm E}\big(\dbM,\Cbf^{Y_0,Z,\Gamma}\big)\leq Y_0$, and thus $V^{\rm E}\big(\Cbf^{Y_0,Z,\Gamma}\big)\leq Y_0$ by the arbitrariness of $\dbM\in\cM$. 
    Finally, the equality $J^{\rm E}\big(\dbP^{Y_0,Z,\Gamma},\nu^{Y_0,Z,\Gamma},\Cbf^{Y_0,Z,\Gamma}\big)=Y_0$ holds in \eqref{ineq-A} with the control $(\dbP^{Y_0,Z,\Gamma},\nu^{Y_0,Z,\Gamma})$ introduced in the admissibility condition (ii) of Definition \ref{def:V}.  
    This shows that $(\dbP^{Y_0,Z,\Gamma},\nu^{Y_0,Z,\Gamma})\in\widehat\cM^{\rm E}\big(\Cbf^{Y_0,Z,\Gamma}\big)\neq\emptyset$, and therefore $\Cbf^{Y_0,Z,\Gamma}\in\frakC$.
 
 \vspace{2mm}
  
\noindent \textbf{1.b.} We next show $(\widehat{\dbP},\widehat{\nu})\in\widehat{\cM}^{\rm E}\big(\Cbf^{Y_0,Z,\Gamma}\big)$ 
      if and only if $H_t(Y_t,Z_t,\Gamma_t)=h_t(Y_t,Z_t,\Gamma_t,\widehat{\nu}_t)$, $dt\otimes\widehat{\dbP}$-a.e.~on $\llbracket 0,\tau^{\scriptscriptstyle\rm P}\rrbracket$, 
      i.e., the control process $\widehat{\nu}$ is a maximizer of the Hamiltonian on the support of $\widehat{\dbP}$. 
   It follows from \eqref{ineq-A} and the equality $V^{\rm E}\big(\Cbf^{Y_0,Z,\Gamma}\big)= Y_0$, established in Step 1.a, that we must have for all $(\widehat\dbP,\widehat\nu)\in\widehat\cM^{\rm E}\big(\Cbf^{Y_0,Z,\Gamma}\big)$ that
     $$ \dbE^{\widehat\dbP}\left[\int_0^{\tau^{\scriptscriptstyle\rm P}} \cK_r^{\widehat\nu} \Big(H_r\big(Y_r^{Y_0,Z,\Gamma},Z_r,\Gamma_r\big) - h_r\big(Y_r^{Y_0,Z,\Gamma},Z_r,\Gamma_r,\widehat\nu_r\big)\Big)dr\right] = 0. $$
   By the definition of $H$ in \eqref{eq:defH}, this holds if and only if $\widehat\nu$ is a maximizer of $H_r\big(Y_r^{Y_0,Z,\Gamma},Z_r,\Gamma_r\big)$, $dt\otimes\widehat\dbP$-a.e.~on $\llbracket 0,\tau^{\scriptscriptstyle\rm P}\rrbracket$.
 
 \vspace{2mm}
 
 \noindent To summarize: for $(\tau^{\scriptscriptstyle\rm P},\pi)\in\cT\times\Pi$, $Y_0\geq R$ and $(Z,\Gamma)\in\cV$, we have that $\Cbf^{Y_0,Z,\Gamma}=(\tau^{\scriptscriptstyle\rm P},\pi,\xi^{Y_0,Z,\Gamma})\in\mathfrak C$, i.e., $\Cbf^{Y_0,Z,\Gamma}$ is an admissible contract, 
            and $\widehat\cM^{\rm E}(\Cbf^{Y_0,Z,\Gamma})\neq\emptyset$ as well as $V^{\rm E}(\Cbf^{Y_0,Z,\Gamma})=Y_0$. 
           Therefore, it follows immediately that $V^{\rm PE}\geq\sup_{Y_0\geq R}\underline{V}^{\rm PE}(Y_0)$.
 
\vspace{3mm}
  
 \noindent \textbf{Step 2.} By Proposition \ref{prop:density}, for any $\Cbf=(\tau^{\scriptscriptstyle\rm P},\pi,\xi)\in\mathfrak{C}_R^{\rm E}$ with $\widehat{\cM}^{\rm E}\neq\emptyset$, we may define a contract $\Cbf^\varepsilon=(\tau^{\scriptscriptstyle\rm P},\pi,\xi^\varepsilon)\in\mathfrak{C}_R^{\rm E}$, 
              where $\xi^\varepsilon = U_{\rm A}^{-1}\big(Y^{Y_0^\varepsilon,Z^\varepsilon,\Gamma^\varepsilon}_{\tau^{\scriptscriptstyle\rm P}}\big)$ for some $(Z^\varepsilon,\Gamma^\varepsilon)\in\cV$, 
              such that $\widehat\cM^{\rm E}(\Cbf^\varepsilon) = \widehat\cM^{\rm E}(\Cbf)$ and $\xi^\varepsilon=\xi$, $\widehat\dbP$-a.s.~for all $(\widehat\dbP,\widehat\nu)\in\widehat{\cM}^{\rm E}(\Cbf)$. 
           Therefore, for each $(\widehat\dbP,\widehat\nu)\in\widehat\cM^{\rm E}(\Cbf)=\widehat\cM^{\rm E}(\Cbf^\varepsilon)$ we obtain that 
            \begin{align*}
              J^{\rm P}(\Cbf^\varepsilon) &= \sup_{(\widehat\dbP,\widehat\nu)\in\widehat\cM^{\rm E}(\Cbf^\varepsilon)}\dbE^{\widehat\dbP}\left[\cK^{\rm P}_{\tau^{\scriptscriptstyle\rm P}} U_{\rm P}(\ell_{\tau^{\scriptscriptstyle\rm P}}-\xi^\varepsilon)+\int_0^{\tau^{\scriptscriptstyle\rm P}}\cK^{\rm P}_r U_{\rm P}(-\pi_r)dr\right] \\
                                          &= \sup_{(\widehat\dbP,\widehat\nu)\in\widehat\cM^{\rm E}(\Cbf)}\dbE^{\widehat\dbP}\left[\cK^{\rm P}_{\tau^{\scriptscriptstyle\rm P}} U_{\rm P}(\ell_{\tau^{\scriptscriptstyle\rm P}}-\xi)+\int_0^{\tau^{\scriptscriptstyle\rm P}}\cK^{\rm P}_r U_{\rm P}(-\pi_r)dr\right] = J^{\rm P}(\Cbf).
            \end{align*}
By Step 1, notice that, the agent's problem with the contract $\Cbf^\varepsilon$ can be explicitly solved and we obtain $V^{\rm A}(\Cbf^\varepsilon)=Y_0^\varepsilon$.            
           By arbitrariness of $\Cbf$, we obtain that $V^{\rm PE}\leq\sup_{Y_0\geq R}\underline{V}^{\rm PE}(Y_0)$.
\end{proof}

In order to obtain a similar reduction result for the American principal-agent problem, we follow Sannikov's \cite{Sannikov08} idea by proceeding to a first reduction of the principal problem which allows to transform the corresponding agent problem into that of a European contract as no early exercise is optimal for the agent. 
 
\begin{proof}[Proof of Theorem \ref{thm:main} {\rm (ii)}] 
 Similar to the proof of Theorem \ref{thm:main} (i), we proceed in three steps, following the classical verification argument in stochastic control theory.

\vspace{3mm}

\no {\bf Step 1.}\q 
  We first prove that $ V^{\rm PA} \ge \sup_{Y_0\ge R} \underline{V}^{\rm PA}(Y_0)$. 
  Let $Y_0\ge R$, $(Z,\Gamma)\in\cV$, $\pi\in \Pi$, and $\mbox{\sc h}_0:=\mbox{\sc h}_0^{Y_0,Z,\Gamma}=\inf\{t\ge 0: Y^{Y_0,Z,\Gamma}\le U_{\rm A}(\rho)\}\in[0,\infty]$ be as defined in the statement of the theorem, 
     and consider the principal contract $\Cbf:=(\mbox{\sc h}_0, \pi,\rho)$. 
  For $\dbM\in \cM$ and $\t \le \mbox{\sc h}_0$ we have
     \begin{align*}
       J^{\rm A}\big(\t, \dbM,\Cbf\big) 
        &= \dbE^\dbP\left[\cK^\nu_\tau U_{\rm A}(\rho)  + \int_0^\tau \cK^\nu_r\big(U_{\rm A}(\pi_r)-c_r(\nu_r)\big)dr\right] \\
        & \le \dbE^\dbP\left[\cK^\nu_\tau Y^{Y_0,Z,\Gamma}_\t + \int_0^\tau \cK^\nu_r\big(U_{\rm A}(\pi_r)-c_r(\nu_r)\big)dr\right] \\
        &= Y_0 - \dbE^{\dbP}\left[\int_0^\tau\cK_r^{\nu} \big(H_r- h_r(.,\nu_r)\big)\big(Y_r^{Y_0,Z,\Gamma},Z_r,\Gamma_r\big) dr\right] \le Y_0.
     \end{align*}
  The last inequality is due to the definition of $H$. Moreover, as $\tau\le\mbox{\sc h}_0$, it is clear that the only way to turn both inequalities above into equalities is to take 
    \begin{align} \label{verif-American}
       \dbM^{Y_0,Z,\Gamma} = \big(\dbP^{Y_0,Z,\Gamma},\nu^{Y_0,Z,\Gamma}\big) \quad \mbox{and} \quad
       \widehat\t =\mbox{\sc h}_0,~\dbP^{Y_0,Z,\Gamma}\mbox{-a.s.}, 
    \end{align}
    where we use the notations of Definition \ref{def:V}, together with the condition that the set $\cV$ is non-empty. 
  Therefore $V^{\rm A}(\Cbf)=Y_0$ with optimal American agent response given by the pair $\big(\mbox{\sc h}_0, \dbM^{Y_0,Z,\Gamma}\big)$. 
  By the same argument as in {\bf Step 1} of the proof of Theorem \ref{thm:main} (i), this provides the inequality $V^{\rm PA} \ge \sup_{Y_0\ge R} \underline{V}^{\rm PA}(Y_0)$.

\vspace{3mm}

\no {\bf Step 2.}\q In order to prove that equality holds, we introduce the dynamic version of the American agent problem for an arbitrary $\Cbf^{\rm A}=(\tau^{\scriptscriptstyle\rm P},\pi)$: 
 \beaa
   V^{\rm A}_t(\Cbf^{\rm A}) := \esssup_{\t\ge t,\hspace{0.5mm}\dbM\in\cM} 
                          \dbE^{\dbP}_t\left[\cK^{\nu}_{t,\tau\we\t^{\scriptscriptstyle\rm P}}U_{\rm A}(\rho)
                                +\int_t^{\tau\we\t^{\scriptscriptstyle\rm p}} \mathcal K^{\nu}_{t,s} \big(U_{\rm A}(\pi_s)-c_s(\nu_s)\big)ds\right],
 \eeaa
  where $\cK^\nu_{t,s}:=(\cK^\nu_t)^{-1}\cK^\nu_s$. 
 Then define
   \bea\label{eq:optimalstopping}
     \widehat\t:=\inf\Big\{t\ge 0:~\overline{V}^{\rm A}_t \le U_{\rm A}(\rho)\Big\},\q\mbox{where}\,\,\, \overline{V}^{\rm A}_t = \lim_{s\downarrow t, s\in \dbQ}V^{\rm A}_s.
   \eea
Note that $\widehat\t\le\tau^{\scriptscriptstyle\rm P}$. We claim and shall prove in {\bf Step 3} that $\widehat\t$ is an optimal stopping time for the agent, i.e.
 \bea\label{eq:VAdefn}
  V^{\rm A}_0(\Cbf^{\rm A}) 
     =  
   \sup_{\dbM\in\cM} 
   \dbE^{\dbP}\bigg[\cK^{\nu}_{\widehat\t} 
                                U_{\rm A}(\rho)
                                +\int_0^{\widehat\t} \mathcal K^{\nu}_{s} \big(U_{\rm A}(\pi_s)-c_s(\nu_s)\big)ds 
                       \bigg].
 \eea
Therefore, we may reduce the principal to offer contracts of the form $\mathbf{C}^{\rm A}=(\widehat\t,\pi)$, as her utility criterion is not changed by fixing $\t^{\scriptscriptstyle\rm P}:=\widehat\t$, and the agent's problem reduces to 
   $$ V^{\rm A}\big(\Cbf^{\rm A}\big) = \sup_{ \dbM\in\cM} J^{\rm A}\big(\widehat\t, \dbM,\Cbf^{\rm A}\big) 
                                 = \sup_{\dbM\in\cM} J^{\rm E}\big(\dbM,\Cbf\big),\quad \mbox{with}~~\Cbf:=\big(\Cbf^{\rm A},\rho\big). $$  

We have thus transformed the American agent problem into a stochastic control problem (without optimal stopping) as in the European agent context of Theorem \ref{thm:main} (i), and we may now continue by adapting the same argument as in {\bf Step 2} of the proof of Theorem \ref{thm:main} (i). 
Namely, Proposition \ref{prop:density} guarantees the existence of a contract $\Cbf^\varepsilon=(\widehat\tau,\pi,\xi^\varepsilon)\in\mathfrak{C}_R$, 
   where $\xi^\varepsilon = U_{\rm A}^{-1}\big(Y^{Y_0^\varepsilon,Z^\varepsilon,\Gamma^\varepsilon}_\tau\big)$ for some $(Z^\varepsilon,\Gamma^\varepsilon)\in\cV$, 
   such that $\widehat\cM^{\rm E}(\Cbf^\varepsilon) = \widehat\cM^{\rm E}(\Cbf)$ and $\xi^\varepsilon=\rho$, $\widehat\dbP$-a.s.~for all $(\widehat\dbP,\widehat\nu)\in\widehat{\cM}^{\rm E}(\Cbf)$. 
Next, define the new contract $\Cbf^\e :=(\widehat{\t}^\e, \pi,\rho)$ where $\widehat{\t}^\e:=\widehat{\t}\wedge\inf\{t\ge 0:~ Y^\e_t \leq U_{\rm A}(\rho)\}$, and we observe that for all $(\widehat\dbP,\widehat\nu)\in \widehat \cM^{\rm E}(\Cbf) =\widehat \cM^{\rm E}(\Cbf^\e)$, we have $\widehat{\t}^\e = \widehat\t$, $\widehat\dbP$-a.s., which is exactly the condition \eqref{verif-American} required for the verification argument in Step 1 of the present proof.

We continue the proof by following exactly the same line of argument as in {\bf Step 2} of the proof Theorem \ref{thm:main} (i), and we obtain the required equality.

\vspace{3mm}

\no {\bf Step 3.}\q  Here we are going to complete the proof by showing that $\widehat\t$ in \eqref{eq:optimalstopping} is the optimal stopping time for the agent. First, by the definition of $V^{\rm A}$, we have for any $t'\ge t$
  \begin{equation*}
    V^{\rm A}_t(\Cbf^{\rm A}) 
     \ge
%      \esssup_{\t\ge t,\hspace{0.5mm}\dbM\in\cM} 
%      \dbE^{\dbP}_t\bigg[\cK^{\nu}_{t,t'} 
%                                 V^{\rm A}_{t'}(\Cbf^{\rm A})
%                                 +\int_t^{t'\we\t\we\t^{\scriptscriptstyle\rm p}} \mathcal K^{\nu}_{t,s} \big(U(\pi_s)-c_s(\nu_s)\big)ds 
%                        \bigg].
     \dbE^{\dbP}_t\left[\cK^{\nu}_{t,t'}V^{\rm A}_{t'}\big(\Cbf^{\rm A}\big)\right].
  \end{equation*}
 Therefore, $\cK^\nu_{0,t}V^{\rm A}_t$ is a $\dbP$-supermartingale for all $(\dbP,\nu)\in \cM$. 
 Then, it is a classical result (see e.g.~\cite[Proposition 1.3.14]{KS91}) that the right limit of the process $V^{\rm A}$ exists $\dbP$-a.s.~for all $\dbP\in \cP$. 
 In particular, the process $\overline{V}^{\rm A}$ defined in \eqref{eq:optimalstopping} is right-continuous $\dbP$-a.s.~for all $\dbP\in \cP$, and thus $\widehat\t$ is a stopping time. 
 Further, let $\big(\widehat \dbP,\widehat \nu\big)$ be an optimal control, and thus
  \begin{equation*}
    V^{\rm A}_t(\Cbf^{\rm A}) 
      =
      \esssup_{\t\ge t} 
      \dbE^{\widehat\dbP}_t\bigg[\cK^{\widehat\nu}_{t,\tau\we\t^{\scriptscriptstyle\rm P}} 
                                U_{\rm A}(\rho)
                                +\int_t^{\tau\we\t^{\scriptscriptstyle\rm p}} \mathcal K^{\widehat\nu}_{t,s} \big(U_{\rm A}(\pi_s)-c_s(\widehat\nu_s)\big)ds 
                           \bigg].
  \end{equation*}
 It follows the standard result of optimal stopping that the optimal stopping time is equal to $\widehat\t$, $\widehat\dbP$-a.s. 
 Therefore, we obtain \eqref{eq:VAdefn}.
\end{proof}

\section{Examples} \label{sec:example}

\subsection{Sannikov \cite{Sannikov08}}

This section reports our understanding of the model in Sannikov \cite{Sannikov08}. 
Given a European contract $\Cbf=(\tau,\pi,\xi)$ proposed by the principal, the agent has a nonnegative increasing strictly concave utility function $U_{\rm A}$ and a nonnegative increasing convex cost function $h$, and is solving:
 \beaa
 \sup_{\alpha} \dbE^{\dbP^{\alpha}}\left[e^{-r\tau}U_{\rm A}(\xi)+\int_0^\tau e^{-rt}\big(U_{\rm A}(\pi_t)-h(\alpha_t)\big) dt\right], 
 \eeaa
 where
  $$ X_t=X_0+\int_0^t \alpha_sds+dW^\alpha_s, ~t\geq 0,\quad \dbP^\alpha\mbox{-a.s.}, $$
and, as in the previous example, the agent's effort $\alpha$ is an arbitrary progressively measurable process taking values in some subset $A\subseteq\dbR$ and satisfying $\dbE^{\dbP^0}\big[\mathrm{D}^{\dbP^\alpha |\dbP^0}_T\big]=1$. 

The Hamiltonian is given by
 \beaa
 H(y,z,\gamma)
 =
 -ry+
 \frac12{\rm Tr}[\gamma]+H^0(z),
 &\mbox{where}&
 H^0(z)
 :=
 \sup_{a\in A} \big\{az-h(a)\big\},
 \eeaa
and we assume for simplicity that the supremum is attained by the unique optimal response $\widehat a(z)$. Then, similar to the example from the previous section, the lump sum payment $\xi$ promised at $\tau$ takes the form
 \beaa
 U_{\rm A}(\xi)
 = 
 Y^{Y_0,Z}_\tau
 =
 Y_0+\int_0^\tau Z_t dX_t + \int_0^\tau\big(rY_t-H^0(Z_t)-U_{\rm A}(\pi_t)\big)dt,
 \eeaa
and $Y$ represents the continuation utility of the agent. 

\begin{remark} \label{rem:y=0}
Before continuing, we make the crucial observation that the non-negativity condition on $U_{\rm A}$ and $h$ implies that $Y\ge 0$. As the dynamics of the process $Y$ are given by 
 $$ dY_t=\big(rY_t+h\circ\widehat a(Z_t)-U_{\rm A}(\pi_t)\big)dt+Z_t dW_t^{\widehat a(Z)}, \quad \dbP^{\widehat a(Z)}\mbox{-a.s.}$$ 
 under the optimal response of the agent, we see that $0$ is an absorption point for the continuation utility with optimal effort $\widehat a=0$. 
\end{remark}

By the main reduction result of Theorem \ref{thm:main} we have
 \begin{align*}
   V^{\rm PE}
    &:= \sup_{Z\in\cV}\sup_{\tau\in\cT} 
         \dbE^{\dbP^{\widehat a(Z)}}\left[\int_0^\tau e^{-r t}\big(\widehat a(Z_t)-\pi_t\big)dt
                                  -e^{-r \tau}U_{\rm A}^{-1}\big(Y^{R,Z}_\tau\big)\right],        
 \end{align*}
where 
 $$
  dX_t = \widehat a(Z_t)dt + dW_t^{\widehat a(Z)}
   ~~\mbox{and}~~
  dY_t = \big(rY_t+h\circ\widehat a(Z_t)-U_{\rm A}(\pi_t)\big)dt+ Z_t dW_t^{\widehat a(Z)},
    ~~\dbP^{\widehat a(Z)}\mbox{-a.s.}
 $$
thus leading to a mixed stochastic control and optimal stopping problem with reward function upon stopping (or obstacle) $v_0:=-U_{\rm A}^{-1}$. By classical stochastic control theory, the HJB equation corresponding to this problem is
 \begin{align*}
  0 &= \min\left\{v-v_0,~
                  r v-ryv' -\sup_{\pi}\big\{-\pi-U_{\rm A}(\pi)v'\big\}
                           -\sup_{z}\Big\{\widehat a(z)+h\circ\widehat a(z)v'+\frac12 z^2 v''\Big\}
            \right\} \nonumber \\
    &= \min\left\{v-v_0,~
                  r(v-yv') +\inf_{\pi}\big\{\pi+U_{\rm A}(\pi)v'\big\}
                           -\sup_{a}\Big\{a+h(a)v'+\frac12 \gamma(a)^2v''\Big\}
           \right\}, \quad y\geq 0, \nonumber
 \end{align*}
 by using the inverse optimal response function $\gamma:=\widehat a^{-1}$. 
Finally, it follows from Remark \ref{rem:y=0} together with the definition of the principal problem that the boundary condition at the left boundary of the domain is $v(0)=0$. 
We are then reduced to the obstacle problem
 \bea\label{DPE:Sannokov} 
 0
 =
 \min\!\Big\{ v-v_0
                  \,,\,
                  r (v-yv')+I(v')-J(v',v'')
         \Big\}, 
 ~~
 y\!>\!0
 ~~\mbox{and}~~v(0)=0,
 \eea
where, assuming further that $U_{\rm A}$ is $C^1$ with $U_{\rm A}'(0)=\infty$ and $U_{\rm A}'(\infty)=0$, 
  \begin{align} \label{IJ}
  	I(p):=\big((U_{\rm A}')^{-1}+p U_{\rm A}\circ(U_{\rm A}')^{-1}\big)\Big(\frac{-1}{p}\Big) 
  	 \quad \mbox{and} \quad
  	 J(p,q):=\sup_{a\in A} \Big\{a+h(a)p+\frac12 \gamma(a)^2 q\Big\}.
  \end{align}
 We also refer the interested reader to the recent work \cite{PT2020} for more detailed analysis of this model.

\subsection{An American contracting version of Sannikov \cite{Sannikov08}}

In the context of the previous example, let the agent utility function be such that $U(0)=0$. Given an American contract $\Cbf=(\tau^{\scriptscriptstyle\rm P},\pi)$, the agent problem is defined by:
 \beaa
 V^{\rm A}(\tau^{\scriptscriptstyle\rm P},\pi)
 :=
 \sup_{\tau,\alpha} \dbE^{\dbP^{\alpha}}\left[\int_0^{\tau\wedge\tau^{\scriptscriptstyle\rm P}} e^{-rt}\big(U_{\rm A}(\pi_t)-h(\alpha_t)\big) dt\right]
 \eeaa
where
 \beaa
 X_t=X_0+\int_0^t \alpha_sds+dW^\alpha_s, 
   ~~t\geq 0,~~\dbP^\alpha\mbox{-a.s}.
 \eeaa
The principal chooses optimally the contract by solving:
 \beaa
  V^{\rm PA} :=
  \sup_{\substack{\tau^{\scriptscriptstyle\rm P}, \pi \\ V^{\rm A}(\tau^{\scriptscriptstyle\rm P},\pi) \geq U(R)}} 
   \dbE^{\dbP^{\widehat\alpha}}
  \left[\int_0^{\tau^{\rm P}\wedge\widehat\tau} e^{-rt}\big(\widehat\alpha_t-\pi_t\big)dt\right],
 \eeaa
where $(\widehat\tau,\widehat\alpha)$ denotes the optimal response of the agent to the proposed contract $(\tau^{\scriptscriptstyle\rm P},\pi)$. Applying the result of our main theorem, and following similar calculations as in the previous example, we see that 
 \beaa
  V^{\rm PA} = \sup_{Y_0\ge R}V_0(Y_0),
   &\mbox{where}&
  V_0(Y_0) := \sup_{Z,\pi} \dbE \left[\int_0^{{\rm T}_0} e^{-rt}(\widehat a(Z_t)-\pi_t)dt\right],
 \eeaa
where $\widehat a$ is the maximizer of the Hamiltonian, as defined in the previous example, and ${\rm T}_0:=\inf\{t>0: Y_t\le 0\}$, and the controlled state $Y$ is defined by the dynamics:
 $$ dY_t = \big(rY_t + h\circ\widehat a(Z_t) - U_{\rm A}(\pi_t)\big)dt + Z_t dW^{\widehat a(Z)}, \quad \dbP^{\widehat a(Z)}\mbox{-a.s.} $$
By standard stochastic control theory, we see that the dynamic programming equation corresponding to this problem is
 \beaa
 v(0)=0,
 &\mbox{and}&
 r (v-yv')+I(v')-J(v',v'')=0~~\mbox{on}~~(0,\infty),
 \eeaa
where $I$ and $J$ are defined in \eqref{IJ}. Notice that the last equation differs from \eqref{DPE:Sannokov}  by the absence of the obstacle constraint. 
However, it is shown in \cite{PT2020} that the two equations are equivalent, so that by their uniqueness result, the American contracting version of Sannikov coincides with the original Sannikov contracting problem.

\subsection{An explicit example without optimal contract}

This section illustrates the use of our main result in the context of the European contracting problem. 
In order to gain in simplicity and to favour as most explicit results as possible, the following example intentionally violates the technical conditions of the general contracting problem. 
We refrain from giving a fully rigorous proof of the solution provided in the present example, and we shall point out how our main results may be extended to the present context.

Suppose that the contract has no continuous payment component, and that the agent is solving the simple problem with $\tau:=\tau^{\scriptscriptstyle\rm P}$ possibly taking $\tau=\infty$ with positive probability:
 \begin{align*}
   \sup_{\alpha} \dbE^{\dbP^\alpha}\left[\xi-\frac12\int_0^\tau \alpha_t^2 dt\right],
 \end{align*}
 where $\alpha$ is any progressively measurable process which guarantees the existence of a weak solution $\dbP^\alpha$ for the following SDE:
  $$ X_t=X_0+\int_0^t \alpha_sds+W_t^{\alpha},  \quad 0\leq t \leq\tau, \quad \dbP^\alpha\mbox{-a.s.} $$
Clearly, this requires that $\dbE^{\dbP^0}\big[{\rm D}^{\dbP^\alpha |\dbP^0}_T\big]=1$, so that existence follows from the Girsanov theorem. In the present context, we observe that we also have uniqueness of such a weak solution.

% 
% The agent hired at time $0$ works continuously from time $0$ to $\tau:=\tau^{\scriptscriptstyle\rm P}$. 
% The output process $X$ satisfies the dynamics 
%   $$ X_t = X_0 + \int_0^t \alpha_sds + W^\alpha_t,   $$
%   where 
%    $$ W^\alpha_t = W_t - \int_0^t\alpha_sds. $$
% Define a new probability measure $\dbP^\alpha$ via 
%   $$ {\rm D}^{\dbP^\alpha |\dbP}_t:=\frac{d\dbP^\alpha}{d\dbP}\bigg|_{\cF_t} = \exp\left(\int_0^t \alpha_s dW_s - \frac{1}{2}\int_0^t|\alpha_s|^2ds\right). $$
% The control process $\alpha$ is any progressively measurable process which guarantees the existence of a weak solution for the last SDE. Clearly, we need that $\dbP\big[\rm D^{\dbP^\alpha |\dbP}_\tau\big]=1$, so that existence follows from the Girsanov theorem. In the present context, we observe that we also have uniqueness of such a weak solution.
%
% Suppose that the contract has no continuous payment component, and that the agent is solving the simple problem:
% {\color{red} (On n'a pas besoin d'un facteur d'actualisation ici ? 
%   \begin{equation*}
%     \sup_{\alpha} \dbE^{\dbP^\alpha}\left[e^{-r\tau}\xi-\int_0^\tau e^{-rt}\frac{1}{2}\alpha_t^2 dt\right].)
%   \end{equation*}
% }

The Hamiltonian is given by
 \begin{align*}
    H(y,z,\gamma)
    =
    \frac12{\rm Tr}[\gamma]+H^0(z), 
    \quad \mbox{where} \quad
     H^0(z)
    :=
    \sup_{a\in\dbR} \Big\{az-\frac12a^2\Big\}
    =
    \frac12 z^2,
 \end{align*}
and the supremum is attained by the optimal response $\widehat a(z)=z$. In particular, the agent optimal response is unique. In the present setting, the lump sum payment $\xi$ takes the form
 \begin{align*}
   \xi = Y^{Y_0,Z}_\tau 
    &= Y_0+\int_0^\tau Z_t dX_t - \int_0^\tau H^0(Z_t)dt  
     = Y_0+\int_0^\tau Z_t dX_t - \int_0^\tau\frac12 Z_t^2dt.
 \end{align*}
This representation may be proved by means of the standard dynamic programming principle satisfied by the agent dynamic value process together with appropriate transvesality conditions satisfied by the stopping time $\tau$ and the admissible controls. 
This is in fact related to the corresponding backward SDE which allows for possibly infinite stopping, see \cite[Secton 6.3]{CZ12} and \cite{LRTY20, Roy04}. 
Given $\xi = Y_\tau^{Y_0,Z}$, the agent's optimal control is $\widehat\alpha = Z$, and $V_0^E(\tau,\xi) = Y_0$.

%Let $U(x)=-e^{- \gamma x}$ be the exponential utility function, $\gamma\in(0,1)$. 
Then, the main reduction result of Theorem \ref{thm:main} applies and provides
 \begin{align*}
   V^{\rm PE}
     &= \sup_{(\tau,\xi)\in\mathfrak{C}_R^E}
        \dbE^{\dbP^{\alpha^*}}\bigg[ \int_0^\tau e^{-\beta t}dX_t - e^{-\beta\tau}\xi\bigg] 
%      &= \sup_{Y_0\geq R}\sup_{Z\in\cV}\sup_{\tau\in\cT} \dbE^{\dbP^{Z}}\bigg[ \int_0^\tau e^{-\beta t}Z_tdt-e^{-\beta\tau}Y^{Y_0,Z}_\tau\bigg] \\
      = \sup_{Z\in\cV}\sup_{\tau\in\cT} \dbE^{\dbP^{Z}}\bigg[ \int_0^\tau e^{-\beta t}Z_tdt - e^{-\beta\tau} Y^{R,Z}_\tau\bigg],
 \end{align*}
 where 
 \begin{align*}
 	 dX_t
 	=
 	Z_tdt + dW^Z_t
 	\quad \mbox{and} \quad
 	dY_t
 	=
 	\frac12 Z_t^2 dt+Z_t dW^Z_t,
 	\quad \dbP^{Z}\mbox{-a.s.}
 \end{align*}
% {\color{red} Quel est notre probl\`eme ? Est-ce qu'on n'a pas le probll\`eme ceci ?
%    \begin{align*}
%      V^{\rm PE} = \sup_{(\tau,\xi)\in\mathfrak{C}_R^E} \dbE^{\dbP^{\alpha^*}}\left[e^{-r\tau}(X_\tau-\xi)\right]. 
%    \end{align*} } 
% {\color{red} Si l'on r\'esout le probl\'eme comme dans le papier de Sannikov, avec le facteur d'actualisation, on a 
%   $$ dY_t = \Big(\frac12|Z_t|^2 + rY_t\Big)dt +Z_t dW^Z_t,~~\dbP^{Z}\mbox{-a.s.} $$}

By classical stochastic control theory, the HJB equation corresponding to this combined optimal control and optimal stopping problem is
 \begin{align} \label{DPEv}
   0  
    &=
     \min\Big\{v-v_0
                  \,,\,
                  \beta v-\sup_{z\in\dbR}\Big(\frac12z^2(v'+v'')+z\Big) 
         \Big\} 
       ~~\mbox{with}~~v_0(y):= -y,  \\
    &=
     \min\Big\{ v + y
                  \,,\,
                  \beta v+\frac12(v'+v'')^{-1}
         \Big\},
       ~~\mbox{with}~~v'+v''<0, \nonumber
 \end{align}
 where the supremum is attained at 
  \begin{align} \label{eq:hatz}
  	\widehat z(y):= -\frac{1}{v'(y)+v''(y)}.
  \end{align}

By introducing the function $u(s):=s v(\ln{s})$, for $s>0$, we compute that $u'(s)=(v+v')(\ln{s})$ and $u''(s)=\frac1s(v'+v'')(\ln{s})$, thus reducing the last ODE to
%  $$
%  0
%  =
%  \min\Big\{ u-u_0
%                   \,,\,
%                   (\beta + r\ln{s})u-rs\ln{s}\,u'+\frac{1}{2u''}
%      \Big\},
%  ~~\mbox{with}~~
%  u''<0, 
%  ~~\mbox{and}~~u_0(s):=-s\ln{s}.
%  $$
% Let us simplify further this example by specializing to the case $r=0$, and search for a smooth solution of the ODE
 \bea\label{ODEu0}
 0
  =
 \min\Big\{ u-u_0
                  \,,\,
                  \beta u+\frac{1}{2u''}
         \Big\},
 ~~\mbox{with}~~
 u''<0, 
 ~~\mbox{and}~~u_0(s):=-s\ln{s}.
 \eea
Notice that the strict concavity of $u$ together with $\beta u+\frac{1}{2u''}\ge 0$ imply that $u>0$, 
and therefore $u$ must be increasing. 
We may explore the region where the solution $u$ possibly coincides with the obstacle $u_0(s)=-s\ln(s)$:
\bea\label{arret}
 \{u=u_0\}
 \subseteq
 \left\{\beta u_0+\frac{1}{2u_0''}
          \ge 0
 \right\}
 =
 \Big\{-\beta s\ln{s}-\frac{s}{2}
          \ge 0
 \Big\}
 =
 \Big\{s\le s^*:=e^{-\frac1{2\beta}}\Big\}.
\eea

This suggests searching for a solution of \eqref{ODEu0} of the form 
\bea\label{u:form}
 u_n(s)
 =
 \mathbf{1}_{\{s\le s_n\}} u_0(s)
 +\mathbf{1}_{\{s>s_n\}} u(s),
 ~~\mbox{for some}~s_n\in (0,s^*],
\eea
 and some $C^2$ function $u_n\ge u_0$ satisfying
 \bea\label{ODEu}
 \beta \,u_n+\frac{1}{2u_n''}=0
 ~~\mbox{on}~(s_n,\infty),
 &\mbox{with}&
 u_n(s_n)=u_0(s_n),
 ~
 u_n'(s_n)=u_0'(s_n).
 \eea
The last ODE is equivalent to $2\beta u_n''+\frac1{u_n}=0$ which, after multiplying by $u_n'$ and direct integration and using the boundary condition in \eqref{ODEu}, provides
 \bea\label{ODEu'2}
 \beta u_n'(s)^2=c_n-\ln{u_n(s)},~~s\ge s_n, 
 &\mbox{where}&
 c_n:=\beta u'_0(s_n)^2+\ln{u_0(s_n)}.
 \eea  
By the smooth fit condition, we have $u'_n(s_n)=u_0'(s_n)\ge u_0'(s^*) = \frac1{2\beta}-1>0$ for $\beta\in(0,\frac12)$.  
We then search for an increasing candidate solution of the ODE 
 $$ \sqrt{\beta} u_n'(s)=\sqrt{c_n-\ln{u_n(s)}},\quad s\ge s_n. $$ 
Direct integration of this equation provides 
 \beaa
 s-s_n
 =
 \sqrt{\beta}
 \int_{s_n}^s\frac{u_n'(t)}{\sqrt{c_n-\ln{u_n(t)}}} dt
 =
 e^{c_n}\sqrt{\beta\pi}
 \int_{c_n-\ln{u_n(s)}}^{c_n-\ln{u_0(s_n)}} \gamma(t) dt
 ,
 &s\ge s_n,&
 \eeaa
 where $\gamma(t):=\frac{e^{-t}}{\sqrt{\pi t}}$ is the density function of the $\Gamma(\frac12,1)$ distribution.
Denoting by $F$ the corresponding cumulative distribution function, and recalling that $c_n-\ln{u_0(s_n)} = \beta u_0'(s_n)^2$, we see that
\bea\label{us0}
 \ln{u_n(s)}
 =
 c_n-F^{-1}\Big(-\frac{s-s_n}{e^{c_n}\sqrt{\beta\pi}}
                                     +F\big(\beta u_0'(s_n)^2\big)
                             \Big),
 &s\in[s_n,s_n'),&
\eea
where $s_n'$ is the maximum value of $s$ such that the last equation has a solution:
 \bea\label{sn'}
 s_n':=s_n + e^{c_n}\sqrt{\beta\pi}F\big(\beta u_0'(s_n)^2\big),
 &\mbox{and}&
  u(s_n')=e^{c_n},~~u'(s_n')
 =0.
 \eea 
At this point, we observe that $s'_n<\infty$, so that the maximal increasing solution of the ODE started from an arbitrary $s_n\in(0,s^*]$ is only defined up to the finite point $s'_n$. 
However, if we choose a sequence $s_n$ converging to zero, then $u_0'(s_n)\longrightarrow\infty$ and $c_n\longrightarrow\infty$, so that $s_n'\longrightarrow\infty$. 
For this reason, in order to construct a solution of the ODE on the positive real line, we now set 
 \beaa
 s_n:=\frac1n,
 &\mbox{and we extend $u_n$ to $\dbR_+$ by}&
 u_n(s):=u_n(s'_n)=e^{c_n},
 ~\mbox{for all}~
 n\ge 1,
 \eeaa
 and we argue that the sequence $(u_n)_n$ is increasing. 
Indeed, $u_n>u_0$ on $(s_n,s_n']$ because $u_0''(s_n)<u_n''(s_n)$ as $s_n<s^*$. Then, $u_{n+1}(s_n)>u_n(s_n)$ and by standard comparison of the solution of the ODE, we see that $u_{n+1}>u_n$ on $(s_{n+1},\infty)$. 

Consequently, there exists a strictly concave increasing function $u$ on $\dbR_+$, such that
 \bea
 u_n\longrightarrow u,
 &\mbox{pointwise and uniformly on compact subsets of}&
 \dbR_+,
 \eea
by the Dini theorem. This limiting function satisfies
 \beaa
 u(0)=0,~u'(0)=\infty,~u(\infty)=\infty,~u'(\infty)=0,
 &\mbox{and}&
 u>u_0,~\beta u+\frac1{2u''}=0~\mbox{on}~(0,\infty),
 \eeaa
and therefore induces the required classical solution $v(y)=e^{-y}u(e^y)$ of the dynamic programming equation \eqref{DPEv}. Figure \ref{fig:test1} shows a numerical result of $u$ and $u_0$ with $\beta=0.05$.
Finally, by following a classical verification argument, 
	we may show that the solution of the optimal control-stopping problem is 
	 $$ \widehat z\big(\widehat Y_s\big),  \quad \widehat{\tau}:=\inf\Big\{t>0: \widehat Y_t=-\infty\Big\} = \infty, $$
	 with the optimal controlled dynamics
	  \begin{equation*}
	  \widehat{Y}_t = R+\int_0^t\widehat z\big(\widehat Y_s\big)dX_s - \int_0^t\frac12\widehat z\big(\widehat Y_s\big)^2ds,
	  \end{equation*}
	where $\widehat z$ is defined in \eqref{eq:hatz}, and the value of the problem is $v(R)$. See Figure \ref{fig:test2} for the numerical result of $\widehat{z}$. 
	We conclude that in this example there is no optimal contract with finite terminal time. 
% Here, we obtain a candidate for the optimal contract $\big(\infty,\xi:=\widehat Y_{\infty}\big)$.
% Although we have a semi-explicit solution for the optimal control-stopping problem, we note that there is no non-trivial solution for the principal-agent problem, due to the fact that $\widehat{\tau}=\infty$.

\begin{figure}
	\centering
	\begin{minipage}{.5\textwidth}
		\centering
		\includegraphics[width=.95\linewidth]{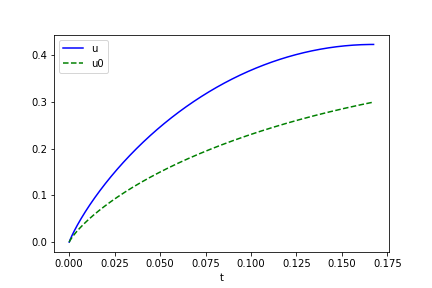}
		\captionof{figure}{functions $u$ and $u_0$}
		\label{fig:test1}
	\end{minipage}%
	\begin{minipage}{.5\textwidth}
		\centering
		\includegraphics[width=.95\linewidth]{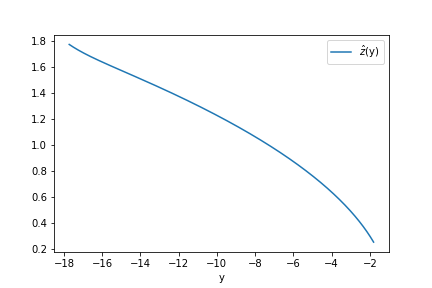}
		\captionof{figure}{optimal control $\widehat{z}(y)$}
		\label{fig:test2}
	\end{minipage}
\end{figure}

\section{Density of revealing contracts} 
\label{sec:proof}

% For presentation simplicity, we report in the appendix the spaces which are needed in the rest of the paper.
For $(t,\omega)\in\llbracket 0, \tau\rrbracket$, define
  \begin{equation*}  \label{Sigma}
    \Sigma_t(\omega,b):=(\sigma_t\sigma_t^\top)(\omega,b) \quad \mbox{and} \quad {\bf \Sigma}_t(\omega) := \big\{\Sigma_t(\omega,b)\in\cS_d^+(\dbR) : b\in B\big\}.
  \end{equation*}
We also introduce the inverse map which assigns to every squared diffusion $\Sigma\in{\bf \Sigma}_t(\omega)$ the corresponding set of generating controls
  \begin{equation*}
    {\bf B}_t(\omega,\Sigma):=\big\{b\in B: (\sigma_t\sigma_t^\top)(\omega,b)=\Sigma\big\}. 
  \end{equation*}
This allows us to isolate the partial maximization with respect to the squared diffusion in the Hamiltonian $H$ in \eqref{eq:defH}: 
  \begin{align*}
    H_t(\omega,y,z,\gamma) = \sup_{\Sigma\in{\bf\Sigma}_t(\omega)}\left\{F_t(\omega,y,z,\Sigma) + \frac{1}{2}{\rm Tr}[\Sigma\gamma]\right\},
  \end{align*}
  where
  \begin{align*}
    F_t(\omega,y,z,\Sigma):= \sup_{(a,b)\in A\times{\bf B}_t(\omega,\Sigma)}\big\{-c_t(\omega,a,b)-k_t(\omega,a,b)y + \sigma_t(\omega,b)\lambda_t(\omega,a)\cdot z\big\}.
  \end{align*}
We see that $2H$ is the convex conjugate of $-2F$.
Let $\Sigma_t(\omega,b)^{\frac{1}{2}}$ denote the corresponding square root and consider 
  \begin{equation} \label{SDESigma}
   X_t = X_0 + \int_0^t\Sigma_r(\omega,\beta_r)^{\frac{1}{2}}dW_r.
  \end{equation}
Clearly, any weak solution $(\dbP,\beta)$ of \eqref{SDESigma} is also a solution of \eqref{SDEsigma}. 
Let
  \begin{align*}
    \cP^o := \big\{\dbP^o\in\mathfrak{M}_+^1(\Omega): (\dbP^o,\beta) \mbox{ is a weak solution of \eqref{SDEsigma} for some }\beta\big\},
  \end{align*}
  and notice that for any weak solution $(\dbP^o,\beta)$ of \eqref{SDESigma} we have that for $\dbP^o$-almost every $\omega\in\Omega$
  \begin{align*}
    \widehat\sigma^2_t(\omega)\in{\bf \Sigma}_t(\omega) \quad \mbox{and} \quad \beta_t(\omega)\in{\bf B}_t\big(\omega,\widehat{\sigma}_t^2(\omega)\big).
  \end{align*}
For any fixed diffusion coefficient, there is a one-to-one correspondence between the solutions of \eqref{Xalphabeta} and \eqref{SDEsigma} through Girsanov's theorem. 
Define 
  $$ \cU^o(\dbP^o):=\big\{\nu=(\alpha,\beta), \,\dbF\mbox{-optional: } \alpha_t(\omega)\in A(\omega),\,\beta_t(\omega)\in \mathbf{B}_t\big(\omega,\widehat{\sigma}_t^2(\omega)\big)\,\mbox{ on }\dbR_+, \,\dbP^o\mbox{-a.s.}\big\}, $$
  and 
  $$ \cM^o:=\big\{(\dbP^o,\nu):\,\dbP^o\in\cP^o \mbox{ and } \nu\in \cU^o(\dbP^o)\big\}. $$
Notice that we have a one-to-one correspondence between the set of control models $\cM$ and the set $\cM^o$ by means of Girsanov's theorem. 
We may rewrite the agent's problem 
 \begin{equation*}
   V^{\rm E}(\Cbf) = \sup_{\dbP^o\in\cP^o}V^{\rm E}(\Cbf,\dbP^o)
 \end{equation*}
 with 
 \begin{equation*}
   V^{\rm E}(\Cbf,\dbP^o) := \sup_{\nu\in\cU^o(\dbP^o)}\dbE^{\dbP^{\nu}}\left[K_\tau^\nu U_{\rm A}(\xi) + \int_0^\tau\cK_r^\nu\big(U_{\rm A}(\pi_r)-c_r(\nu_r)\big)dr\right].
 \end{equation*}
  where the measure $\dbP^\nu$ is defined by the Girsanov transformation 
   $$ \frac{d\dbP^\nu}{d\dbP^o}\bigg|_{\cF_t} = \cE\left(\int_0^\cdot\lambda_r(\alpha_r)\cdot dW_r\right)_t, \quad t\geq 0. $$
 
\vspace{3mm}
 
We now provide a representation of the agent's value function by means of second-order backward SDEs (2BSDEs) as introduced by Soner, Touzi and Zhang \cite{STZ12}. 
We apply our recent development of 2BSDE with random horizon and without the regularity conditions \cite{LRTY20}, based on the work of Possama\"i, Tan and Zhou \cite{PTZ18}.

Given a final payment $\xi$, we consider the 2BSDE
 \begin{equation} \label{2BSDE}
   Y_{t\wedge\tau} = U_{\rm A}(\xi) + \int_{t\wedge\tau}^\tau \big(F_s(Y_s,Z_s,\widehat{\sigma}^2_s) + U_{\rm A}(\pi_s)\big)ds - \int_{t\wedge\tau}^\tau Z_s\cdot dX_s + \int_{t\wedge\tau}^\tau dK_s, \quad \dbP^o\mbox{-a.s.}, 
 \end{equation}
 for each $\dbP^o\in\cP^o$. 

\begin{definition} \label{def:2BSDE}
  For $1<p<q$ and $-\mu\leq\eta<\rho$, the process $(Y,Z,K)\in\cD^p_{\eta,\tau}\big(\cP^o, \dbF^{+,\cP^o}\big)\times \cH^p_{\eta,\tau}\big(\cP^o,\dbF^{\cP^o}\big)\times \cI^p_{\eta,\tau}(\cP^o,\dbF^{\cP^o})$ is the solution of the 2BSDE \eqref{2BSDE}, 
   if 
   \begin{itemize}
    \item for each $\dbP^o\in\cP^o$, $(Y,Z,K)$ satisfies \eqref{2BSDE} $\dbP^o$-a.s.
    \item the nondecreasing process $K$ satisfies the minimality conidtion: for all $\dbP^o\in \cP^o$  
            \begin{equation*}
               K_{t_1\wedge \tau} = \essinf^{\dbP^o}_{\dbP'\in\cP_+(t_1\wedge\tau,\dbP^o)}\mathbb{E}^{\dbP'}\Big[K_{t_2\wedge\tau}\Big|\mathcal{F}^{+, \dbP'}_{t_1\wedge\tau}\Big], 
            \end{equation*}
            where 
             $$  \cP^o_+(\sigma, \dbP):=\bigcup_{h>0}\cP^o\big((\sigma+h)\wedge \tau, \dbP\big), \quad \cP^o(\sigma, \dbP):=\big\{\dbP'\in\cP^o: \dbP'=\dbP\mbox{ on }\cF_\sigma\big\}. $$ 
   \end{itemize}
\end{definition}
 
The definition of 2BSDE here is slightly different from that in \cite{LRTY20}: the nondecreasing process $K$ is assumed to be aggregated, i.e., $K$ is given as a unique process, and not as a family of processes indexed by $\cP^o$. 
Indeed, in general a family of processes $\big\{K^{\dbP^o}\big\}_{\dbP^o\in\cP^o}$ is given through a nonlinear Doob-Meyer or optional decomposition theorem, applied under each $\dbP^o\in\cP^o$. 
Under the usual set-theoretic Zermelo-Fraenkel set theory (ZFC) framework and the continuum hypotheses, as in Nutz \cite{Nut12}, the stochastic integral $\int_0^tZ_s\cdot dX_s$ can be defined pathwisely on $\Omega$ without the need for exclusion of any null set
  and therefore does not depend on $\dbP^o$. 
Consequently, $K$ does not depend on $\dbP^o$. In other words, $\big\{K^{\dbP^o}\big\}_{\dbP^o\in\cP^o}$ can be aggregated into the resulting medial limit $K$, i.e., $K^{\dbP^o}=K$, $\dbP^o$-a.s.~for all $\dbP^o\in\cP^o$. 

\begin{proposition}
  For all $\Cbf\in\frakC$ the 2BSDE \eqref{2BSDE} has a unique solution. 
\end{proposition}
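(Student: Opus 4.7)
The plan is to verify that the data $\bigl(\tau,F,U(\pi),U(\xi),\cP^o\bigr)$ associated with the 2BSDE \eqref{2BSDE} fits the assumptions of the wellposedness theorem of \cite{LRTY18}, and then invoke that result directly. The proposition is essentially a checking exercise, since the heavy analytical work (comparison, a priori estimates, construction via the nonlinear optional decomposition of Possama\"i--Tan--Zhou \cite{PTZ18}, and aggregation via medial limits) has already been carried out in the companion paper.

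First, I would verify the structural conditions on the generator. The map $F$ defined by
\[
F_t(\omega,y,z,\Sigma)
=\sup_{(a,b)\in A\times\mathbf{B}_t(\omega,\Sigma)}
\bigl\{-c_t(\omega,a,b)-k_t(\omega,a,b)y+\sigma_t(\omega,b)\lambda_t(\omega,a)\cdot z\bigr\}
\]
is uniformly Lipschitz in $y$ (since $k$ is bounded) and uniformly Lipschitz in $z$ after the natural change of variable $\sigma^{\top}z$ (since $\lambda$ and $\sigma$ are bounded). Its free term is dominated by $\overline c$, so Assumption \ref{assum:costc} provides the required integrability. Monotonicity of $F$ in $\Sigma$ and the convex-conjugate relation with $H$ are immediate from the definition.

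Next I would verify the terminal and running data. The admissibility condition \eqref{contract-growth} gives
\[
\cE^{\cP(t,\omega)}\bigl[|e^{\rho\vec\tau^{t,\omega}}U(\xi^{t,\omega})|^{q}\bigr]
+\cE^{\cP(t,\omega)}\!\Bigl[\Bigl(\!\int_0^{\vec\tau^{t,\omega}}\!|e^{\rho r}U(\pi_r^{t,\omega})|^2dr\Bigr)^{q/2}\Bigr]<\infty
\]
for some $q>1$ and $\rho>-\mu$, which is exactly the integrability of the terminal and running data required in the norms $\cL^{q}_{\rho,\tau}(\cP^{o})$ and $\cH^{q}_{\rho,\tau}(\cP^{o})$. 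Combined with Assumption \ref{assum:costc}, picking any $1<p<q$ and any $\eta\in[-\mu,\rho)$ places the data strictly inside the admissibility scale of \cite{LRTY18}. The condition $\tau\in\cT$ from Definition \ref{assum:xi.pi}\,(i) yields \eqref{eq:tau}, which is the random-horizon integrability hypothesis used to absorb the unboundedness of $\tau$ via the exponential discount.

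Finally, I would verify the set-theoretic requirements on the family $\cP^o$: namely that it is saturated under equivalent changes of measure along the canonical filtration, stable under conditioning and concatenation at stopping times $\sigma\le\tau$, and admits a measurable selection of controls. Each of these follows from the weak-formulation construction of $\cP^o$ via \eqref{SDESigma}, exactly as in \cite{CPT18,LRTY18}, because the coefficients $\sigma_t(\cdot,b)$ are $\dbF$-optional and bounded and $\cM\neq\emptyset$. Once these ingredients are in place, the existence of a unique triple $(Y,Z,K)\in\cD^p_{\eta,\tau}\times\cH^p_{\eta,\tau}\times\cI^p_{\eta,\tau}$ satisfying both the $\dbP^o$-wise equation and the minimality condition of Definition \ref{def:2BSDE} is precisely the conclusion of the main wellposedness theorem of \cite{LRTY18}. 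The step I expect to be the most delicate in the verification is the minimality/aggregation clause: this is where the appeal to the Nutz pathwise construction of $\int_0^{\cdot}Z_s\cdot dX_s$ under the continuum hypothesis is used to aggregate the family $\{K^{\dbP^{o}}\}_{\dbP^{o}\in\cP^{o}}$ into a single nondecreasing process $K$, as already discussed in the paragraph preceding the proposition.
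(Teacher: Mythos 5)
Your proposal follows essentially the same route as the paper: check saturation of the family $\cP^o$, verify the Lipschitz, monotonicity and integrability conditions on the generator $F+U(\pi)$ using the boundedness of $k,\lambda,\sigma$ together with Assumption \ref{assum:costc} and \eqref{contract-growth}, note the dynamic programming requirements (the paper invokes El Karoui--Tan for these), and then conclude by citing the wellposedness theorem of \cite{LRTY18}, including the aggregation of $K$ via the pathwise stochastic integral. One small correction: the monotonicity that must be verified is in $y$, namely $(y-y')\bigl(F_t(\omega,y,z,\Sigma)-F_t(\omega,y',z,\Sigma)\bigr)\le -\inf_{(a,b)\in A\times\mathbf{B}_t(\omega,\Sigma)}k_t(\omega,a,b)\,(y-y')^2$, which is precisely what licenses the weight range $\eta\in[-\mu,\rho)$ in the random-horizon norms, rather than the ``monotonicity in $\Sigma$'' you mention; with that substitution your verification matches the paper's.
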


\begin{proof}
  For $(t,\omega)\in\dbR_+\times\Omega$ with $t\leq\tau(\omega)$ we introduce the dynamic versions $\cM^o(t,\omega)$ and $\cP^o(t,\omega)$ of the sets $\cM^o$ and $\cP^o$ by considering the SDE \eqref{SDESigma} on $\llbracket t,\tau\rrbracket$ starting at time $t$ from the path $\omega\in\Omega$. 
  
  \vspace{2mm}
  
  \noindent \textbf{(i).} We first show that the family $\big\{\cP^o(t,\omega):(t,\omega)\in\llbracket 0,\tau\rrbracket\big\}$ is saturated, i.e., for all $\dbP_1^o\in\cP^o(t,\omega)$ we have $\dbP_2^o\in\cP^o(t,\omega)$ for every probability measure $\dbP_2^o\sim\dbP_1^o$ 
    such that $X$ is a $\dbP_2^o$-local martingale. 
  To verify this, notice that the equivalence between $\dbP_1^o$ and $\dbP_2^o$ implies that the quadratic variation of $X$ is not changed by passing from $\dbP_1^o$ to $\dbP_2^o$. 
  As $X$ is a $\dbP_2^o$-local martingale, it follows that if $(\dbP_1^o,\beta)\in\cM^o(t,\omega)$, then $(\dbP_2^o,\beta)\in\cM^o(t,\omega)$.
  
  \vspace{2mm}
  
  \noindent \textbf{(ii).} We next verify that the generator $F_s(Y_s,Z_s,\widehat{\sigma}^2_s) + U_{\rm A}(\pi_s)$ satisfies the conditions of Lipschitz-continuity, monotonicity, and integrability. 
    For all $(t,\omega)\in\llbracket 0,\tau\rrbracket$ and $\Sigma\in\mathbf{\Sigma}_t(\omega)$,
      \begin{align*}
        |F_t(\omega,y,z,\Sigma)-F_t(\omega,y',z',\Sigma)| &\leq \|k_t\|_\infty|y-y'| + \|\lambda_t\|_{\infty}\sup_{b\in\mathbf{B}_t(\omega,\Sigma)}\big|\sigma_t(\omega,b)^\top(z-z')\big|  \\
            &= \|k_t\|_\infty|y-y'| + \|\lambda_t\|_{\infty}\big|\Sigma^\frac{1}{2}(z-z')\big|,
      \end{align*}  
      for $(y,z),(y',z')\in\dbR\times\dbR^d$, and 
      \begin{align*}
        (y-y')\big(F_t(\omega,y,z,\Sigma)-F_t(\omega,y',z,\Sigma)\big) &\leq (y-y')\sup_{(a,b)\in A\times\mathbf{B}_t(\omega,\Sigma)}\big\{-k_t(\omega,a,b)(y-y')\big\} \\
                          & = -\inf_{(a,b)\in A\times\mathbf{B}_t(\omega,\Sigma)}k_t(\omega,a,b)(y-y')^2, 
      \end{align*}
      for $y,y'\in\dbR$. 
    As $k,\sigma,\lambda$ are bounded, the generator is Lipschitz-continuous in $(y,z)$ and monotone in $y$. 
    Notice that
      $$ F_t(\omega,0,0,\Sigma) = \sup_{(a,b)\in A\times\mathbf{B}_t(\omega,\Sigma)}\{-c_t(\omega,a,b)\}. $$
    For $f^0_s(\omega):=F_s(\omega,0,0,\widehat{\sigma}_s^2)+U(\pi_s)$ and 
      \begin{align*}
         f^{0,t,\omega}_s(\omega') &:= F_{t+s}\big(\omega\otimes_t\omega',0,0,\widehat{\sigma}_s^2(\omega')\big)+U_{\rm A}\big(\pi_{t+s}(\omega\otimes_t\omega')\big)  \\
                         &\hspace{1mm} = \sup_{(a,b)\in A\times\mathbf{B}_{t+s}(\omega\otimes_t\omega',\widehat{\sigma}^2_s(\omega'))}\big\{-c_{t+s}(\omega\otimes_t\omega',a,b)\big\} + U_{\rm A}\big(\pi^{t,\omega}_{s}(\omega')\big)
      \end{align*}
      we obtain for $\vec{\tau}=\tau^{t,\omega}-t$ that
      \begin{align*}
        & \cE^{\cP^o(t,\omega)}\bigg[\bigg(\int_0^{\vec{\tau}} e^{2\rho r}\big|f^{0,t,\omega}_r\big|^2dr\bigg)^{\frac{q}{2}}\bigg] \\
        & \quad \leq C_q \left\{\cE^{\cP^o(t,\omega)}\bigg[\bigg(\int_0^{\vec{\tau}} e^{2\rho r}\big|\overline{c}^{t,\omega}_r\big|^2dr\bigg)^{\frac{q}{2}}\bigg] 
                              + \cE^{\cP^o(t,\omega)}\bigg[\bigg(\int_0^{\vec{\tau}} e^{2\rho r}\big|U_{\rm A} (\pi_r^{t,\omega})\big|^2dr\bigg)^{\frac{q}{2}}\bigg]\right\} <\infty, 
      \end{align*}
      where the last inequality follows from Assumption \ref{assum:costc} and \eqref{contract-growth}.
    
    The dynamic programming requirements of \cite[Assumption 2.1]{PTZ18} and \cite[Lemma 6.6]{LRTY20} follow from the more general results given in El Karoui and Tan \cite{ElKTan13,ElKTan15}.
    
    Finally, as $\xi$ satisfies the integrability condition \eqref{contract-growth}, the required well-posedness result is a direct consequence of  \cite[Theorem 3.3]{LRTY20}.
\end{proof}

Now, we have the relation of the agent's problem and 2BSDE. 

\begin{proposition} \label{prop:optimality.et.K}
  Let $(Y,Z,K)$ be the solution of the 2BSDE \eqref{2BSDE}. 
  Then, we have 
    $$ V^{\rm E}(\Cbf) = \sup_{\dbP^o\in\cP^o}\dbE^{\dbP^o}[Y_0]. $$
  Moreover, $(\widehat\dbP,\widehat\nu)\in\widehat\cM^{\rm E}(\Cbf)$ if and only if 
    \begin{itemize}
     \item $\widehat\nu$ is a maximizer in the definition of $F(Y,Z,\widehat\sigma^2)$, $dt\otimes\widehat\dbP$-a.e.;
     \item $K_\tau=0$, $\widehat\dbP$-a.s.
    \end{itemize}
\end{proposition}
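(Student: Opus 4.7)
The plan is to apply It\^o's formula to the process $\cK^\nu_\cdot Y_\cdot$ along the 2BSDE dynamics \eqref{2BSDE}, to extract a master identity from which both claims of the proposition follow, and then to close the value-function identity via the classical BSDE-control correspondence combined with the 2BSDE stochastic representation from \cite{LRTY18}.

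First, I would fix $\dbP^o\in\cP^o$ and $\nu=(\alpha,\beta)\in\cU^o(\dbP^o)$, and denote by $\dbP^\nu$ the measure obtained from $\dbP^o$ by the Girsanov change of measure associated with $\lambda(\alpha)$, so that $dX_t=\widehat{\sigma}_t\lambda_t(\alpha_t)dt+\widehat{\sigma}_t\,dW^\nu_t$ under $\dbP^\nu$. Writing $h^0_s(\nu):=-c_s(\nu)-k_s(\nu)Y_s+\widehat{\sigma}_s\lambda_s(\alpha)\cdot Z_s$, applying It\^o's formula to $\cK^\nu_t Y_t$, substituting \eqref{2BSDE}, using $Y_\tau=U(\xi)$, and taking $\dbP^\nu$-expectation, I expect the master identity
\begin{equation*}
J^{\rm E}(\dbM,\Cbf) = \dbE^{\dbP^o}[Y_0] - \dbE^{\dbP^\nu}\!\left[\int_0^\tau\cK^\nu_s\bigl(F_s(Y_s,Z_s,\widehat{\sigma}^2_s)-h^0_s(\nu_s)\bigr)ds+\int_0^\tau\cK^\nu_s\,dK_s\right],
\end{equation*}
after verifying that $\int_0^{\cdot\wedge\tau}\cK^\nu_s Z_s\cdot\widehat{\sigma}_s\,dW^\nu_s$ is a true $\dbP^\nu$-martingale by a standard localization argument using the boundedness of $\lambda$ and $k$ together with the $\cH^p_{\eta,\tau}$-integrability of $Z$. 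Since $F_s\ge h^0_s(\nu)$ by the very definition of $F$ and $K$ is nondecreasing, both bracketed integrals are nonnegative, which yields $J^{\rm E}(\dbM,\Cbf)\le \dbE^{\dbP^o}[Y_0]$; taking suprema gives $V^{\rm E}(\Cbf)\le\sup_{\dbP^o\in\cP^o}\dbE^{\dbP^o}[Y_0]$.

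For the reverse inequality, I would exploit the classical BSDE representation: for each $\dbP^o\in\cP^o$, the BSDE under $\dbP^o$ with generator $F_s(\cdot,\widehat{\sigma}^2_s)+U(\pi_s)$ and terminal $U(\xi)$ admits a unique solution $y^{\dbP^o}$, and a measurable selection of a maximizer of $F$ combined with a Girsanov change of measure realizes $y^{\dbP^o}_0=V^{\rm E}(\Cbf,\dbP^o)$ by the classical verification argument in BSDE control theory. The 2BSDE wellposedness result \cite[Theorem 3.3]{LRTY18} underlying Definition \ref{def:2BSDE} further provides the stochastic representation $Y_t=\esssup^{\dbP^o}_{\dbP'\in\cP^o(t,\dbP^o)} y^{\dbP'}_t$, whence $\dbE^{\dbP^o}[Y_0]\ge y^{\dbP^o}_0=V^{\rm E}(\Cbf,\dbP^o)$; taking suprema closes the identity $V^{\rm E}(\Cbf)=\sup_{\dbP^o\in\cP^o}\dbE^{\dbP^o}[Y_0]$.

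Finally, to characterize $\widehat{\cM}^{\rm E}(\Cbf)$, I apply the master identity to an optimal response $(\widehat{\dbP},\widehat{\nu})$: since $J^{\rm E}(\widehat{\dbM},\Cbf)=V^{\rm E}(\Cbf)=\dbE^{\widehat\dbP}[Y_0]$, the sum of the two nonnegative bracketed expectations must vanish, so each vanishes separately. The first vanishes if and only if $\widehat{\nu}_s$ attains the supremum defining $F_s(Y_s,Z_s,\widehat{\sigma}_s^2)$, $ds\otimes\widehat{\dbP}$-a.e.; the second, by the strict positivity of $\cK^{\widehat{\nu}}$ and monotonicity of $K$, if and only if $K_\tau=0$, $\widehat{\dbP}$-a.s. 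The converse follows from the same identity, which now equals $Y_0$. The main obstacle I foresee is not algebraic but measure-theoretic: reconciling the $\dbP^o$-a.s.\ construction of $(Y,Z,K)$ with the family $\{\dbP^\nu\}$ driving the agent's problem. This is resolved by the saturation property established in the preceding proposition together with the equivalence $\widehat{\dbP}\sim\dbP^o$ afforded by Girsanov, which ensures null sets transfer, and by the aggregation of $\{K^{\dbP^o}\}$ into a single process $K$, as emphasized after Definition \ref{def:2BSDE}.
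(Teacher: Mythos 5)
Your Step 1 (the master identity obtained by applying It\^o's formula to $\cK^\nu Y$ along \eqref{2BSDE}) is exactly the computation the paper uses, but the value identity is not closed: your ``reverse inequality'' step proves an inequality pointing in the same direction as Step 1. From the representation $Y_0=\esssup^{\dbP^o}_{\dbP'\in\cP^o_+(0,\dbP^o)}\cY_0^{\dbP'}$ you only retain the trivial bound $\dbE^{\dbP^o}[Y_0]\ge \dbE^{\dbP^o}\big[\cY_0^{\dbP^o}\big]$, which yields $\sup_{\dbP^o}\dbE^{\dbP^o}[Y_0]\ge V^{\rm E}(\Cbf)$; combined with Step 1's $V^{\rm E}(\Cbf)\le\sup_{\dbP^o}\dbE^{\dbP^o}[Y_0]$, this is the same inequality twice. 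The missing (and hard) direction is $\dbE^{\dbP^o}[Y_0]\le V^{\rm E}(\Cbf)$ for each $\dbP^o$. It cannot be extracted from the master identity under a fixed $\dbP^o$: the minimality condition on $K$ involves measures that agree with $\dbP^o$ only on $\cF_0^+$ (resp.\ up to small stopping times), so nothing forces $\dbE^{\dbP^\nu}\big[\int_0^\tau\cK^\nu_s\,dK_s\big]$ to be small along controls compatible with that fixed $\dbP^o$. The paper's route is to identify each $\cY_0^{\dbP'}$, via the linear BSDEs \eqref{eq:controlBSDE} and \cite[Corollary 3.1]{EKPQ97} (with $\eps$-maximizers, since an exact maximizer of $F$ need not exist for a general admissible contract), with $\esssup_\nu\dbE^{\dbP'^{\nu}}[\,\cdot\,|\cF_0^+]$, and then to interchange $\dbE^{\dbP^o}$ with the essential supremum through an upward-directedness argument as in \cite[Lemma 3.5]{PTZ18}, using that $\dbP'^{\nu}=\dbP^o$ on $\cF_0^+$; only then does $\dbE^{\dbP^o}[Y_0]\le\sup_{(\dbP',\nu)}\dbE^{\dbP'^{\nu}}[\cdots]=V^{\rm E}(\Cbf)$ follow.

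Two related points. First, writing $y_0^{\dbP^o}=V^{\rm E}(\Cbf,\dbP^o)$ identifies an $\cF_0^+$-measurable random variable with a number; making this rigorous requires precisely the conditional-expectation and directedness bookkeeping above, not a ``classical verification with a measurable maximizer'' (which may not exist). Second, the gap propagates to your characterization of $\widehat\cM^{\rm E}(\Cbf)$: the step $J^{\rm E}(\widehat\dbM,\Cbf)=V^{\rm E}(\Cbf)=\dbE^{\widehat\dbP}[Y_0]$ uses $\dbE^{\widehat\dbP^o}[Y_0]\le V^{\rm E}(\Cbf)$, i.e.\ exactly the missing direction. Once the value identity is established as in the paper, the remainder of your argument --- vanishing of the two nonnegative remainder terms, strict positivity of $\cK^{\widehat\nu}$, and the measure-transfer remarks via Girsanov and aggregation of $K$ --- is correct and coincides with the paper's part (ii).
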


\begin{proof}
 \textbf{(i).} By \cite[Proposition 5.2]{LRTY20}, the solution of the 2BSDE \eqref{2BSDE} can be represented as the supremum of the solutions of BSDEs 
     \begin{equation} \label{eq:2BSDErep}
        Y_0 = \esssup_{\dbP'\in\cP^o_+(0,\dbP^o)}^{\dbP^o}\cY_0^{\dbP'}, \quad \dbP^o\mbox{-a.s.~for all }\dbP^o\in\cP^o, 
     \end{equation}
     where for all $\dbP^o\in\cP^o$, $(\cY^{\dbP^o},\cZ^{\dbP^o})$ is the solution of the following BSDE under $\dbP^o$:
      $$ \cY_0^{\dbP^o} = U_{\rm A}(\xi) + \int_0^\tau\Big( F_r\big(\cY^{\dbP^o}_r,\cZ^{\dbP^o}_r,\widehat\sigma_r^2\big) + U_{\rm A}(\pi_r)\Big) dr - \int_0^\tau\cZ^{\dbP^o}_r\cdot dX_r -\int_0^\tau dM^{\dbP^o}_r, \quad \dbP^o\mbox{-a.s.} $$
     with a c\`adl\`ag $(\dbF^{+,\dbP^o},\dbP^o)$-martingale $M^{\dbP^o}$ orthogonal to $X$. 
    For all $(\dbP^o,\nu)\in\cM^o$, consider the linear BSDE with 
     \begin{equation}  \label{eq:controlBSDE}
       \begin{aligned}
         \cY^{\dbP^o,\nu}_0 = \xi &+ \int_0^\tau\big(-c^\nu_r - k_r^\nu\cY_r^{\dbP^o,\nu} + \sigma^\beta_r\lambda^\alpha_r\cdot\cZ_r^{\dbP^o,\nu} + U_{\rm A}(\pi_r)\big)dr \\ 
                                  &- \int_0^\tau\cZ_r^{\dbP^o,\nu}\cdot dX_r - \int_0^\tau dM^{\dbP^o,\nu}_r, \qquad \dbP^o\mbox{-a.s.},
       \end{aligned}
     \end{equation}
     where $c^\nu_r:=c_r(\nu_r)$ and similar notation apply to $k^\nu$, $\sigma^\beta$, $\lambda^\alpha$. 
    Let $\dbP^\nu$ be the probability measure equivalent to $\dbP^o$ such that 
      $$ \frac{d\dbP^\nu}{d\dbP^o}\bigg|_{\cF_t} = \cE\left(\int_0^\cdot\lambda_r^\alpha dW_r\right)_t, \quad t\geq 0, $$
      where $W$ is a Brownian motion under $\dbP^o$. 
    By It\^o's formula 
     \begin{align*}
      \cK^\nu_\tau\cY^{\dbP^o,\nu}_\tau &= \cY_0^{\dbP^o,\nu} - \int_0^\tau\cK_r^{\nu}k_r^\nu\cY_r^{\dbP^o,\nu} dr + \int_0^\tau\cK_r^\nu\big(c^\nu_r+k^\nu_r\cY_r^{\dbP^o,\nu}-\sigma_r^\beta\lambda^\alpha_r\cdot\cZ_r^{\dbP^o,\nu}-U_{\rm A}(\pi_r)\big)dr \\
          & \hspace{14mm} + \int_0^\tau\cK_r^\nu\cZ_r^{\dbP^o,\nu}\cdot dX_r + \int_0^\tau\cK^\nu_rdM^{\dbP^o,\nu}_r  \\
          &= \cY_0^{\dbP^o,\nu} - \int_0^\tau\cK_r^\nu\big(U_{\rm A}(\pi_r)-c^\nu_r\big)dr + \int_0^\tau\cK_r^\nu\cZ_r^{\dbP^o,\nu}\cdot (dX_r-\sigma_r^\beta\lambda^\alpha_rdr) + \int_0^\tau\cK^\nu_rdM^{\dbP^o,\nu}_r,
     \end{align*}
     where $X_t-\int_0^t\sigma_r^\beta\lambda^\alpha_rdr=\int_0^t\sigma_r^\beta(dW_r-\lambda^\alpha_rdr)$ is a martingale under $\dbP^\nu$. 
    Taking conditional expectation 
     $$ \cY_0^{\dbP^o,\nu} = \dbE^{\dbP^\nu}\left[\cK_\tau^\nu U_{\rm A}(\xi) + \int_0^\tau\cK_r^\nu\big(U_{\rm A}(\pi_r)-c^\nu_r\big)dr\bigg|\cF_0^+\right], \quad \dbP^o\mbox{-a.s.} $$
    Observe that the affine generators in \eqref{eq:controlBSDE} are equi-Lipschitz by boundedness of $k$, $\sigma$, $\alpha$, and there is an $\eps$-maximizer $\widehat\nu^0\in\cU^o(\dbP^o)$ for all $\eps>0$ 
      which obviously induces a weak solution of the corresponding SDE by Girsanov's theorem. 
    This means that the conditions of \cite[Corollary 3.1]{EKPQ97} are satisfied and provides a representation of $\cY_0^{\dbP^o}$ as a stochastic control representation for all $\dbP^o\in\cP^o$ as 
      \begin{equation}  \label{eq:controlBSDE1}
        \cY_0^{\dbP^o} = \esssup^{\dbP^o}_{\nu\in\cU^o(\dbP^o)}\cY_0^{\dbP^o,\nu} 
                       = \esssup^{\dbP^o}_{\nu\in\cU^o(\dbP^o)}\dbE^{\dbP^\nu}\left[\cK_\tau^\nu U_{\rm A}(\xi) + \int_0^\tau\cK_r^\nu\big(U_{\rm A}(\pi_r)-c^\nu_r\big)dr\bigg|\cF_0^+\right], \quad \dbP^o\mbox{-a.s.}
      \end{equation}
%     The presence of the orthogonal martingale $M^{\dbP^o}$ here does not induce any change in the argument of \cite{EKPQ97}. 
    Then, for all $\dbP^o\in\cP^o$, we obtain $\dbP^o$-a.s.
      \begin{align*}
        Y_0 &= \esssup^{\dbP^o}_{(\dbP',\nu)\in\cP^o_+(0,\dbP^o)\times\cU^o(\dbP')}\dbE^{\dbP'^{\nu}}\left[\cK_\tau^\nu U_{\rm A}(\xi) + \int_0^\tau\cK_r^\nu\big(U_{\rm A}(\pi_r)-c^\nu_r\big)dr\bigg|\cF_0^+\right]  \\
            &= \esssup^{\dbP^o}_{(\dbP',\nu)\in\cM^o,\,\dbP'=\dbP^o \textnormal{ on }\cF_0^+}\dbE^{\dbP'^{\nu}}\left[\cK_\tau^\nu U_{\rm A}(\xi) + \int_0^\tau\cK_r^\nu\big(U_{\rm A}(\pi_r)-c^\nu_r\big)dr\bigg|\cF_0^+\right].
      \end{align*}
    By similar arguments as in the proof of \cite[Lemma 3.5]{PTZ18}, we may show that the family 
      $$ \left\{\dbE^{\dbP'^{\nu}}\left[\cK_\tau^\nu U_{\rm A}(\xi) + \int_0^\tau\cK_r^\nu\big(U_{\rm A}(\pi_r)-c^\nu_r\big)dr\bigg|\cF_0^+\right],\, (\dbP',\nu)\in\cM^o\right\} $$
      is upward directed. 
    Therefore, we may conclude that
     \begin{align*}
       \sup_{\dbP^o\in\cP^o}\dbE^{\dbP^o}[Y_0] 
         &= \sup_{\dbP^o\in\cP^o}\dbE^{\dbP^o}\!\!\left[\esssup^{\dbP^o}_{(\dbP',\nu)\in\cM^o,\,\dbP'=\dbP^o \textnormal{ on }\cF_0^+}\dbE^{\dbP'^{\nu}}\!\!\left[\cK_\tau^\nu U_{\rm A}(\xi) + \int_0^\tau\cK_r^\nu\big(U_{\rm A}(\pi_r)-c^\nu_r\big)dr\bigg|\cF_0^+\right]\right] \\
         &= \sup_{\dbP^o\in\cP^o}\sup_{\nu\in \cU^o(\dbP^o)}\dbE^{\dbP^\nu}\left[\cK_\tau^\nu U_{\rm A}(\xi) + \int_0^\tau\cK_r^\nu\big(U_{\rm A}(\pi_r)-c^\nu_r\big)dr\right]  \\
         &= \sup_{\dbP^o\in\cP^o}V^{\rm E}(\Cbf,\dbP^o) = V^{\rm E}(\Cbf).
     \end{align*}
    
    \vspace{3mm}
    
  \noindent \textbf{(ii).} 
%     By the above equalities, an optimal control must be such that all the essential suprema are attained. 
%      \textcolor{red}{The one in \eqref{eq:2BSDErep} is attained if and only if the corresponding measure in $\cP^o$ is such that $K=0$ on its support.}
%       Similarly, by \cite[Corollary 3.1]{EKPQ97}, the supremum in \eqref{eq:controlBSDE1} is attained for control processes which maximize $F$. 
%       
   Recall that we have one-to-one correspondence between the set of control models $\cM$ and the set $\cM^o$. 
   From \textbf{(i)}, $\big(\widehat\dbP^o,\widehat\nu\big)\in\cM^o$ is optimal if and only if $V^{\rm E}(\Cbf)=\dbE^{\widehat\dbP^o}[Y_0]=\dbE^{\widehat\dbP^{\widehat\nu}}[Y_0]$. 
   Consider $\dbM^o=\big(\widehat\dbP^o,\widehat\nu\big)$, 
    $$ J^{\rm E}\big(\widehat\dbM^o,\Cbf\big) = \dbE^{\widehat\dbP^{\widehat\nu}}\left[\cK^{\widehat\nu}_\tau U_{\rm A}(\xi) + \int_0^\tau\cK^{\widehat\nu}_r\big(U_{\rm A}(\pi_r)-c_r^{\widehat\nu}\big)dr\right]. $$
   Using It\^o's formula and \eqref{2BSDE}, we obtain that 
    \begin{align*}
      J^{\rm E}\big(\widehat\dbM^o,\Cbf\big) 
       &= \dbE^{\widehat\dbP^{\widehat\nu}}[Y_0] + \dbE^{\widehat\dbP^{\widehat\nu}}\left[\int_0^\tau\cK^{\widehat\nu}_r\big(-c_r^{\widehat\nu}-k_r^{\widehat\nu}Y_r + \sigma_r^{\widehat\beta}\lambda_r^{\widehat\alpha}\cdot Z_r - F_r(Y_r,Z_r,\widehat\sigma_r^2)\big)dr\right] \\
       & \hspace{17mm} - \dbE^{\widehat\dbP^{\widehat\nu}}\left[\int_0^\tau\cK^{\widehat\nu}_r dK_r\right].
    \end{align*}
   Therefore, $\big(\widehat\dbP^o,\widehat\nu\big)$ is optimal if and only if $\widehat\nu$ is a maximizer in the definition of $F$, $dt\otimes\widehat\dbP^o$-a.s., and $K_\tau = 0$, $\widehat\dbP^o$-a.s.
\end{proof}

\begin{proof}[Proof of Proposition \ref{prop:density}]
 Let $\Cbf=(\tau,\pi,\xi)\in\frakC_R^{\rm E}$. By definition, $\Cbf\in\frakC$, $\widehat\cM^{\rm E}(\Cbf)\neq\emptyset$ and $V^{\rm E}(\Cbf)\geq R$.
 Consider the 2BSDE \eqref{2BSDE} with $\xi$. 
 Notice that the integrability conditions, Assumption \ref{assum:costc}, and Definition \ref{assum:xi.pi} imply that the 2BSDE admits a unique solution $(Y,Z,K)$ satisfying 
  $$ \|Y\|_{\cD^p_{\eta,\tau}(\cP^o)} + \|Z\|_{\cH^p_{\eta,\tau}(\cP^o)} <\infty, \quad \mbox{for $\eta\in[-\mu,\rho)$ and $p\in(1,q)$.} $$ 
 By Proposition \ref{prop:optimality.et.K}, we have $K_\tau = 0$, $\widehat\dbP$-a.s., for every $\big(\widehat\dbP,\widehat\nu\big)\in\widehat\cM^{\rm E}(\Cbf)$. 
 
 We fix some $\varepsilon>0$ and define the absolutely continuous approximation of $K$ by 
   $$ K^\varepsilon_t:=\frac{1}{\varepsilon}\int^{t}_{(t-\varepsilon)^+}K_sds, \quad t\geq 0. $$
 Clearly, $K^\varepsilon$ is $\dbF^{\cP^o}$-predictable, nondecreasing $\cP^o$-q.s.~and 
   \begin{equation} \label{eq:K^epsilon=0}
     K^\varepsilon_\tau = 0,\quad \widehat\dbP\mbox{-a.s.~for all } \big(\widehat\dbP,\widehat\nu\big)\in\widehat\cM^{\rm E}(\Cbf). 
   \end{equation}
 We now define the process 
   \begin{equation}  \label{eq:Yeps}
     Y^\varepsilon_t := Y_0 - \int_0^t\Big(F_r\big(Y^\varepsilon_r,Z_r,\widehat\sigma^2_r\big) + U_{\rm A}(\pi_r)\Big)dr + \int_0^tZ_r\cdot dX_r - \int_0^tdK^\varepsilon_r. 
   \end{equation} 
 We may verify that $(Y^\varepsilon,Z,K^\varepsilon)$ solves the 2BSDE \eqref{2BSDE} with terminal condition $\xi^\varepsilon := Y_\tau^\varepsilon$ and generator $F(y,z,\widehat\sigma^2)+U_{\rm A}(\pi)$.
 Indeed, as in the proof of the stability of SDEs, since $K^\varepsilon_\tau\leq K_\tau$ and the norms of $K$ and $Y$ are bounded, we may prove that $\xi^\varepsilon$ satisfies the integrability condition. 
 It follows by \eqref{eq:K^epsilon=0} that $K^\varepsilon$ satisfies the required minimality condition. 
 By a priori estimation we obtain that 
   \begin{equation}  \label{eq:est.of.Yeps.Z}
      \|Y^\eps\|_{\cD^{p'}_{\eta',\tau}(\cP^o)} + \|Z\|_{\cH^{p'}_{\eta',\tau}(\cP^o)} <\infty, \quad \mbox{ for $p'\in(1,p)$ and $\eta'\in[-\mu,\eta)$.}
   \end{equation}
 We observe that a probability measure $\dbP$ satisfies $K_{\tau}=0$ $\dbP$-a.s.~if and only if it satisfies $K^\varepsilon_\tau =0$ $\dbP$-a.s.
 
 Define $\Cbf^\e:=(\tau,\pi,\xi^\eps)$. 
 As $K^\eps=K=0$, $\widehat\dbP$-a.s., for any $(\widehat\dbP,\widehat\nu)\in\widehat\cM^{\rm E}(\Cbf)$, we have $Y^\eps = Y$, $\widehat\dbP$-a.s., in particular $\xi^\eps=\xi$, $\widehat\dbP$-a.s.
 Since by Proposition \ref{prop:optimality.et.K} $\widehat\nu$ is a maximizer in the definition of $F(Y,Z,\widehat\sigma^2)$, $\widehat\nu$ is a maximizer in the definition of $F(Y^\eps,Z,\widehat\sigma^2)$, $dt\otimes\widehat\dbP$-a.e., 
    which again implies by Proposition \ref{prop:optimality.et.K} that $(\widehat\dbP,\widehat\nu)\in\widehat\cM^{\rm E}(\Cbf^\eps)$. The reverse direction holds also true. 
 This implies that $\widehat\cM^{\rm E}(\Cbf^\eps)=\widehat\cM^{\rm E}(\Cbf)\neq\emptyset$.
 
 For $(t,\omega,y,z)\in\llbracket 0,\tau\rrbracket\times\dbR\times\dbR^d$, notice that the map 
   $$ \gamma\mapsto H_t(\omega,y,z,\gamma)-F_t\big(\omega,y,z,\widehat\sigma_t^2(\omega)\big) - \frac{1}{2}{\rm Tr}\big[\widehat\sigma_t^2(\omega)\gamma\big] $$
   is surjective on $(0,\infty)$.
 Indeed, it is nonnegative by the definition of $H$ and $F$, convex, continuous on the interior of its domain, and coercive by the boundedness of $\lambda$, $\sigma$, $k$. 
 Let $\dot{K}^\eps$ denote the density of the absolutely continuous process $K^\eps$ with respect to the Lebesgue measure. 
 The continuity allows us to use the classical measurable selection to find an $\dbF$-predictable process $\Gamma^\eps$ such that 
   $$ \dot{K}^\eps_t = H_t(Y^\eps_t,Z_t,\Gamma^\eps_t) - F_t\big(Y^\eps_t,Z_t,\widehat\sigma_t^2\big) - \frac{1}{2}{\rm Tr}\big[\widehat\sigma_t^2\Gamma^\eps_t\big]. $$
 For $\dot{K}_t^{\eps}>0$, this is a consequence of surjectivity. 
 In the case that $\dot{K}_t^{\eps}=0$, as $\widehat\cM^{\rm E}(\Cbf)=\widehat\cM^{\rm E}(\Cbf^\eps)\neq\emptyset$, it follows from Proposition \ref{prop:optimality.et.K} that $\Gamma^\eps_t$ can be chosen arbitrarily, for instance choose $\Gamma^\eps_t=0$.
 It follows by substituting in \eqref{eq:Yeps} that we have the representation for $Y^\eps$
   $$ Y^\varepsilon_t := Y_0 - \int_0^t\big(H_r(Y^\eps_r,Z_r,\Gamma^\eps_r) + U_{\rm A}(\pi_r)\big)dr + \int_0^tZ_r\cdot dX_r + \frac{1}{2}\int_0^t{\rm Tr}\big[\Gamma^\eps_rd\langle X\rangle_r\big], $$
   so that the contract $\Cbf^\eps=(\tau,\pi,\xi^\eps)=\Big(\tau,\pi,U_{\rm A}^{-1}\big(Y_\tau^{Y_0^\eps,Z,\Gamma^\eps}\big)\Big)$ takes the required form \eqref{def:Y}.
 It follows from \eqref{eq:est.of.Yeps.Z} that the controlled process $(Z,\Gamma^\eps)$ satisfies the integrability condition required in the Definition \ref{def:V} (i).
%  As $K^\eps=K=0$, $\widehat\dbP$-a.s., for any $(\widehat\dbP,\widehat\nu)\in\widehat\cM(\Cbf)$, we have $Y^\eps = Y$, $\widehat\dbP$-a.s., in particular $\xi^\eps=\xi$, $\widehat\dbP$-a.s. 
%  Since by Proposition \ref{prop:optimality.et.K} $\nu$ is a maximizer in the definition of $F(Y,Z,\widehat\sigma^2)$, $\widehat\nu$ is a maximizer in the definition of $F(Y^\eps,Z,\widehat\sigma^2)$, $dt\otimes\widehat\dbP$-a.e., 
%     which implies again by Proposition \ref{prop:optimality.et.K} that $(\widehat\dbP,\widehat\nu)\in\widehat\cM(\Cbf^\eps)$. 
 By the same argument as in \textbf{Step 1.b.}~in the proof of Theorem \ref{thm:main} (i), we obtain for $(\widehat\dbP,\widehat\nu)\in\widehat\cM^{\rm E}(\Cbf^\eps)$ that 
    $$ H_t(Y^\eps_t,Z_t,\Gamma^\eps_t) = h_t(Y^\eps_t,Z_t,\Gamma^\eps_t,\widehat\nu), \quad dt\otimes\widehat\dbP\mbox{-a.e.} $$
 Consequently, the requirement of Definition \ref{def:V} (ii) is satisfied, and therefore $(Z,\Gamma^\eps)\in\cV$ and $\Cbf^\eps\in\frakC$.  
% By \textbf{Step 1.a.}~in the proof of Theorem \ref{thm:main}, we obtain that $\Cbf^\eps\in\frakC$. 
\end{proof}

\bibliography{PAProblem} 
 
\bibliographystyle{abbrv}  %{apalike} % {abbrv} % {alpha}

\end{document}